\numberwithin{equation}{section}
\newtheorem{thm}{Theorem}[section]
\newtheorem{pro}[thm]{Proposition}
\newtheorem{lem}[thm]{Lemma}
\newtheorem{cor}[thm]{Corollary}
\theoremstyle{definition}
\newtheorem*{dfn}{Definition}
\newtheorem{rem}{Remark}
\newtheorem*{ex}{Example}
\theoremstyle{plain}
\newcommand{\B}{\mathcal{B}}
\newcommand{\GB}{\Gamma \backslash \mathcal{B}}
\newcommand{\GG}{\Gamma \backslash G}
\begin{document}
\author{Shai Evra}
\title{Finite quotients of Bruhat-Tits buildings as geometric expanders}
\maketitle
\begin{abstract}
In \cite{FGLNP}, Fox, Gromov, Lafforgue, Naor and Pach, in a respond to a question of Gromov \cite{G}, constructed bounded degree geometric expanders,
namely, simplical complexes having the affine overlapping property. 
Their explicit constructions are finite quotients of $\tilde{A_d}$-buildings, for $d\geq 2$, over local fields.
In this paper, this result is extended to general high rank Bruhat-Tits buildings.
\end{abstract}

\section{Introduction}
The following definition was proposed by Gromov in \cite{G}:

\begin{dfn}
Let $X$ be a $d$-dimensional pure simplicial complex, denote by $X(0)$ its set of vertices and $X(d)$ its set of maximal simplices (chambers). 
For $0 < \epsilon \in \mathbb{R}$, say that $X$ has the $\epsilon$-overlap property or $X$ is an $\epsilon$-geometric expander, 
if for any embedding $f: X(0) \rightarrow \mathbb{R}^d$, 
there exist a point $p \in \mathbb{R}^d$ which is covered by $\epsilon$-fraction of the affine extensions of images of faces inf $X(d)$ under $f$.

A family of $d$-dimensional pure simplicial complexes are geometric expanders if they are all $\epsilon$-geometric expander for some fixed $\epsilon > 0$.
\end{dfn}

The reader can easily see that expander graphs have this property when $d=1$.
Barany's Theorem \cite{B} means that for every $d$, the $d$-dimensional complete complexes on $n$ vertices ($n \rightarrow \infty$) are geometric expanders,
and Gromov \cite{G} proved that the $d$-dimensional spherical buildings over finite fields $\mathbb{F}_q$ ($q \rightarrow \infty$) are also geometric expanders
(see also \cite{LMM} for a proof and generalizations). 
In \cite{FGLNP} Fox, Gromov, Lafforgure, Naor and Pach showed that, at least when the residue field is large enough, the finite quotients of 
Bruhat-Tits buildings of type $\tilde{A}_d$ are geometric expanders.

The goal of the current paper is to generalize this last result to other Bruhat-Tits buildings.
Here is our main result. 
\begin{thm} \label{main}
Let $d \geq 2$ and let $N_d$ be the maximal size of a spherical Coxeter group of rank $d$.
There exists $q_0 = q_0(d)$ and $\epsilon = \epsilon(d) > 0$, such that if $q \geq q_0$ is an odd prime power, then:

Let $G$ be a connected reductive group of s.s.rank $d$, with $G/Z$ almost-simple, over a local field of residue degree $q$, and let $\B$ its corresponding Bruhat-tits building.
Let $\Gamma \leq G$ be a cocompact lattice which acts type-preserving on the building and of injectivity radius $> 8\cdot d\cdot N_d$.
Then the finite quotient $\GB$ has the $\epsilon$-geometric overlap property.
\end{thm}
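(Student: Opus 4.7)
The plan is to adapt the strategy of Fox--Gromov--Lafforgue--Naor--Pach from $\tilde{A}_d$ to general Bruhat--Tits buildings by decomposing the argument into three essential pieces: (i) a topological overlap statement at the level of $\GB$, (ii) a Pach-type theorem that converts topological overlap into an affine overlap, and (iii) a descent from the building $\B$ to the finite quotient $\GB$ enabled by the large injectivity radius hypothesis.

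The first step is to establish that $\GB$ has the topological overlap property via Gromov's machinery: reduce it to a coboundary/cosystolic expansion statement in $\mathbb{F}_2$-coefficients, and then use Garland's local-to-global principle, showing that spectral expansion of every link of a face of positive dimension forces expansion of the whole complex. In a Bruhat--Tits building, the link of a simplex decomposes as a join of spherical buildings over the residue field $\mathbb{F}_q$, together possibly with a Bruhat--Tits piece of smaller rank; the spherical factors are spectral expanders for $q$ large by Feit--Higman/Garland, and the Bruhat--Tits pieces can be handled by induction on rank together with the same local-to-global principle. Since $\Gamma$ is type-preserving and has injectivity radius greater than $8 \cdot d \cdot N_d$, the links in $\GB$ of any simplex coincide with the corresponding links in $\B$, so all local spectral data transfer verbatim to the quotient. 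The factor $N_d$ should arise from the diameter of an apartment (controlled by the order of the spherical Weyl group), and the factor $8d$ from the need to see several concentric link neighborhoods simultaneously during the Garland/cascade argument.

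The second step is to apply a colorful Pach-type theorem, in the form used in \cite{FGLNP}: the type function on the vertices of $\B$ descends, by type-preservation of $\Gamma$, to a proper coloring of $\GB$ by $d+1$ colors in which every chamber is colorful. Combined with the topological overlap from step one, this produces a point in $\mathbb{R}^d$ covered by an $\epsilon$-fraction of the affine chambers, yielding the conclusion.

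The main obstacle is the local-to-global step for non-$\tilde{A}$ types. In $\tilde{A}_d$ every link is a spherical building of type $A$, and all vertex types look alike; for $\tilde{B}_d,\tilde{C}_d,\tilde{D}_d,\tilde{E}_d,\tilde{F}_4,\tilde{G}_2$ and mixed (twisted) types, the links are heterogeneous spherical buildings of various Dynkin types, and the chambers carry vertices with genuinely distinct local geometry. One must therefore verify a uniform spectral gap across \emph{all} spherical buildings of rank $\leq d$ over $\mathbb{F}_q$ (for $q \geq q_0(d)$), and check that the colorful Pach-type input still applies when the chambers are not all equivalent under the automorphism group. The second technical point is the bookkeeping behind the constant $8\cdot d \cdot N_d$: one needs to confirm that covering arguments on $\GB$ only invoke balls of this controlled radius, so that no homotopy or filling argument escapes the region where $\GB$ looks like $\B$.
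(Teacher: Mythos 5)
Your route is not the paper's route, and as sketched it has genuine gaps. The paper never goes through topological overlap or $\mathbb{F}_2$-cosystolic expansion at all: it views $\GB$ as a $(d+1)$-partite hypergraph, bounds its \emph{discrepancy} by the normalized second eigenvalues of the type-induced vertex-versus-wall bipartite graphs $B_i$ (the partite mixing lemma of \cite{EGL}), deduces geometric overlap from small discrepancy via Pach's theorem, and then bounds $\tilde{\lambda}(B_i)$ by writing the $2n$-walk operator as a convex combination of Hecke operators and invoking Oh's uniform matrix-coefficient bounds on $L^2(\GG)$. The central difficulty the paper identifies and solves -- that for general reductive groups Oh's bound on the $2$-walk Hecke operator is trivial, forcing one to take walks of length $2n$ with $n = 2dN_d$ and to control, via convexity of apartments, the proportion of walks that backtrack (Lemma \ref{B-in-building}) -- is nowhere addressed in your proposal; this is exactly where the $\tilde{A}_d$ argument of \cite{FGLNP} breaks for other types, and your plan inherits the same problem in a different guise (you would need uniform quantitative expansion data for all the heterogeneous local geometries, which you only postulate).

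Concretely, two steps of your plan do not go through as stated. First, ``Garland's local-to-global principle'' gives spectral (real-coefficient) expansion from link spectral gaps; it does not yield $\mathbb{F}_2$-coboundary or cosystolic expansion, which is what Gromov's machinery needs for topological overlap. The genuine local-to-global criteria for cosystolic expansion require, in addition to link spectral gaps, that the links themselves be $\mathbb{F}_2$-coboundary expanders with uniform constants, and assembling this for bounded-degree quotients of buildings is a substantially harder theorem than anything in this paper (and was not available by these means); also, links of faces in an affine building are joins of \emph{spherical} buildings over the residue field -- there is no residual ``Bruhat--Tits piece of smaller rank,'' so the proposed induction on rank is based on a misdescription of the local structure. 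Second, your use of Pach is logically misplaced: topological overlap is a statement about all continuous maps, so it already implies affine overlap and no ``colorful Pach-type theorem'' is needed afterwards; conversely, Pach's theorem cannot upgrade a coloring plus topological overlap into anything new. In the paper (as in \cite{FGLNP}) Pach's theorem plays the opposite role: it converts a purely combinatorial discrepancy bound into geometric overlap, precisely so that topological overlap is never required. Finally, the constant $8dN_d$ is not about seeing concentric link neighborhoods; it is the condition $r(\Gamma) > 4n$ with $n = 2dN_d$ guaranteeing that $2n$-walks in the quotient lift isometrically to the building, so that the walk counting and the Hecke-operator identification done in $\B$ apply verbatim in $\GB$.
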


Our proof follows essentially the main line of \cite{FGLNP}.
There are however two delicate issues, which require some explanations. To do so let us give a sketch of the proof.
The complexes $\GB$ may be considered as partite hypergraphs $H=H(\GB) = (V_0,\ldots,V_d,E \subset \prod V_i)$.
For a partite hypergraph $H$, we define its discrepancy $Disc(H)$, to be a measure of the distance of $H$ from a random partite hypergraph (see \S 2).
The discrepancy can be bounded by the second largest eigenvalue in absolute value of certain type-induced bipartite graphs $B_i $ , $i =0,\ldots,d$, 
of $i$-type vertices versus $i$-cotype walls. 
Now the proof breaks down into two parts: 

First, one need to show that the normalized second largest eigenvalue of the type-induced bipartite graphs $B=B_i$ is small.
The original way in \cite{FGLNP} was by interpreting the adjacency operator of the 2-walk of $B$ in terms of Hecke operators of $G=PGL_{d+1}(F)$,
then using a theorem due to Oh \cite{Oh}, which is an explicit form of Kazhdan property (T), to give bounds on the spectrum of these Hecke operators.
The problem one is faced here is that for a general reductive group $G$, the bounds given by Oh's Theorem, on the Hecke operator of the 2-walk, is trivial.
To overcome this obstacle, we consider longer walks on $B$, and by interpreting their adjacency operators as a convex sum of Hecke operators,
we achieve a non-trivial bound on the spectrum of $B$.

Second, using a theorem of Pach \cite{P}, one gets a non-trivial lower bound for geometric overlap property, assuming the discrepancy is sufficiently small. 
Some care is needed: Pach's Theorem is stated in \cite{P} with the restriction $|V_0|=\ldots = |V_d|$ (which is not the case for our complexes).
However Pach's result can be generalized with essentially the same proof also if one drops this equal size restriction.

Finally, while our proof follows the main line of \cite{FGLNP}, we take the opportunity to present the proof in a more conceptual way - 
relating the geometric overlapping to the discrepancy and showing how the discrepancy is computed via induced bipartite graphs, following \cite{EGL}.

\subsection{Organization of the paper}
In Section \ref{disc} we define the notion of discrepancy of a partite hypergraph, 
then we present Pach's Theorem \cite{P} and draw from it an overlap criterion in terms of the discrepancy.
Then we use a partite mixing lemma from \cite{EGL} to bound the discrepancy by the normalized second largest eigenvalue of type-induced graphs.
In Section \ref{build} we collect some facts on the type-induced graphs of the Bruhat-Tits buildings and their finite quotients.
In Section \ref{spec} we define the Hecke operators and through them describe the adjacency operator of the type-induced graphs.
We then use Oh's Theorem \cite{Oh}, to deduce bounds on the spectrum of the Hecke operators.
After some careful computations, this will give us an effective bound on the normalized second largest eigenvalue of the type-induced graphs, 
which will imply the overlap property.
Our notations will follow \cite{EGL}.

\subsection{Acknowledgments}
I would like to thank my adviser Alex Lubotzky, for presenting me with this problem, for his guidance, encouragement and support through valuable discussions,
and for many useful corrections, comments and suggestions to this work.
I wish to thank also Shahar Mozes and Yakov Varshavsky for their help and patience in answering my many questions.
I would also like to thank the anonymous referee for his remarks and corrections, which helped simplify and shorten the paper.
I thank Jacob Fox for bring to my attention a recent result, which gives a better bound on an important constant appearing in the paper. 
I am grateful for the hospitality of the ETH Institute for Theoretical Studies.

This work is based on a M.Sc. thesis, submitted by the author, to the Institute of Mathematics at the Hebrew University of Jerusalem.

\section{Discrepancy and Pach's Theorem} \label{disc}
Let $X$ be a $d$-dimensional pure simplicial complex, and denote by $X(k)$ the collection of $k$-faces of $X$, for $k=0,\ldots,d$.
Let $X$ be equipped with a type function on its vertices, $\tau : X(0) \rightarrow [d]:=\{0,\ldots,d\}$, which is one-to-one on each chamber ($d$-face).
Let $H=H(X)$ be its associate $(d+1)$-uniform $(d+1)$-partite hypergraph, whose sets of vertices are $V_i= \tau^{-1}(\{i\}), i\in [d]$ 
and its edges are the chambers of $X$, $E = X(d)$. 

For a $(d+1)$-partite hypergraph $H=(V_0,\ldots,V_d,E)$ define its discrepancy:
\begin{equation}
Disc(H) = \max_{W_i \subseteq V_i} \vert \frac{|E(W_0,\ldots,W_d)|}{|E|} - \prod_{i=0}^d \frac{|W_i|}{|V_i|} \vert
\end{equation}
where $E(W_0,\ldots,W_d)$ is the set of edges in $E$ with a vertex in each $W_i$.

Let us now present Pach's Theorem. As mentioned in the introduction, in Pach's original Theorem there is an additional restriction that
all the point sets will be of the same size (i.e. $|P_0|=\ldots =|P_d|$), however Pach's original proof works well without this assumption.

\begin{thm}[Pach's Theorem \cite{P}]
There exist $c_d > 0$ such that: for any disjoint point sets $P_0,P_1,\ldots,P_d \subset \mathbb{R}^d$, 
there exists $p \in \mathbb{R}^d$ and $Q_i \subseteq P_i$, $\frac{|Q_i|}{|P_i|} \geq c_d$ for all $i=0,\ldots,d$,
such that every $d$-dimensional (geometric) simplex with exactly one vertex in each $Q_i$ contains $p$. 
\end{thm}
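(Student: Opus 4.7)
The plan is to re-run Pach's original argument \cite{P} with careful accounting of the three potentially different sizes $n_i := |P_i|$, rather than collapsing them to a single $n$. The proof consists of a colored first selection step followed by a geometric cleaning step.

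Step 1 (Colored first selection). Apply the colored version of B\'ar\'any's first selection lemma: there exists a point $p \in \mathbb{R}^d$ and a constant $\alpha_d = \alpha_d(d) > 0$ such that $p$ lies in at least $\alpha_d \cdot n_0 n_1 \cdots n_d$ rainbow simplices spanned by one vertex from each $P_i$. The standard proof, via the colored Carath\'eodory theorem and a first-moment argument over uniformly random rainbow transversals, decomposes multiplicatively over colors, so the bound holds without assuming $n_0 = \cdots = n_d$.

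Step 2 (Cleaning). Reformulate the condition ``$p \in \mathrm{conv}(x_0, \ldots, x_d)$'' via Gordan's alternative: $p$ lies outside the simplex iff there is a direction $u \in S^{d-1}$ with $\langle x_i - p, u\rangle > 0$ for all $i$, i.e.\ every $x_i$ sits in the open half-space $H_u^+ := \{x : \langle x - p, u\rangle > 0\}$. Starting from $Q_i^{(0)} := P_i$, iterate the following rule: while there exists a direction $u$ with $Q_i^{(t)} \cap H_u^+ \neq \emptyset$ for every $i$, choose the color $j$ minimizing the fraction $|Q_j^{(t)} \cap H_u^+|/|Q_j^{(t)}|$, set $Q_j^{(t+1)} := Q_j^{(t)} \setminus H_u^+$, and leave the other colors unchanged. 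When the procedure halts, every rainbow simplex on $\prod_i Q_i$ contains $p$ by construction, and only finitely many directions $u$ need be tested since the arrangement of hyperplanes through $p$ determined by the points has finitely many cells.

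The main obstacle is to prove that, after the cleaning terminates, $|Q_i| \geq c_d n_i$ for each $i$, with a constant $c_d > 0$ depending only on $d$. In Pach's original equal-size setting one can track a single shared fraction $|Q|/n$; once the $n_i$ differ, the ratios $\rho_i^{(t)} := |Q_i^{(t)}|/n_i$ must be controlled independently. The key input is that the lower bound of $\alpha_d \cdot n_0 \cdots n_d$ good rainbow simplices from Step 1 forces, at every stage, a positive fraction of the surviving product $\prod_i Q_i^{(t)}$ to consist of simplices containing $p$; combined with the minimum-fraction removal rule, this caps the possible multiplicative loss in each $\rho_i$ by a constant depending only on $d$. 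Once this bookkeeping is carried out uniformly across the colors, the desired $c_d$ follows, completing the proof of the unequal-size version of Pach's theorem.
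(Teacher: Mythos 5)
There is a genuine gap, and it sits exactly where you flag it yourself. Step 1 is unproblematic: a colored first selection lemma giving a point $p$ in at least $\alpha_d\, n_0 n_1\cdots n_d$ rainbow simplices is available and is indeed insensitive to unequal class sizes. But Step 2 is not a proof. Your greedy cleaning rule (pick a separating direction $u$, delete $Q_j\cap H_u^+$ for the color $j$ of minimum fraction) comes with no potential function, no bound on the number of iterations, and no bound on the cumulative multiplicative loss in any single color: nothing prevents the same color from repeatedly being the minimizer with fraction close to $1/2$, halving it an unbounded number of times. The paragraph you offer as the ``key input'' is an assertion, not an argument: when you delete $Q_j\cap H_u^+$ you also destroy good simplices (transversals using a deleted point together with points of other colors lying outside $H_u^+$ may well contain $p$), so the density of $p$-containing simplices inside the surviving product $\prod_i Q_i^{(t)}$ can degrade, and the min-fraction rule by itself caps nothing. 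Note that the implication you are trying to establish in Step 2 --- ``$p$ lies in a constant fraction of rainbow simplices, hence there are constant-fraction subsets $Q_i$ all of whose rainbow simplices contain $p$'' --- is precisely the nontrivial content of Pach's theorem; it cannot be recovered from the counting statement of Step 1 by first-moment bookkeeping plus pruning. Pach's actual argument is structurally different: it uses the colored Tverberg theorem of \v{Z}ivaljevi\'c--Vre\'cica to produce linearly many \emph{pairwise vertex-disjoint} rainbow simplices through a common point $p$, and then a partition/selection stage (in the spirit of same-type arguments) to extract the sets $Q_i$; the disjointness is what ultimately protects the sizes, and your scheme has no substitute for it.

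Separately, the paper itself does not reprove the statement: it cites \cite{P} and observes that Pach's original proof nowhere uses the hypothesis $|P_0|=\cdots=|P_d|$, so the unequal-size version follows by reading the same proof with $n_i=|P_i|$ (each step --- the colored Tverberg application and the subsequent selection --- is carried out per color). That is also the correct route for you: rather than inventing a new cleaning procedure, verify that each step of Pach's argument is per-color and hence size-insensitive. (Minor point: with $d+1$ classes there are $d+1$, not ``three,'' potentially different sizes, and degenerate point configurations should be handled by a standard perturbation remark.)
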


We call the constant $c_d$ appearing in the Theorem, Pach's constant.

\begin{rem}
In Pach's original paper, the constant $c_d$ is bounded from below by a function which decrease triply exponential in $d$.
However, in a recent paper by Fox, Pach and Suk \cite{FPS}, this bound was improved to a function which decrease exponentially in $d$. 
\end{rem}

From Pach's Theorem we get the following geometric overlap criterion in terms of the discrepancy.

\begin{cor} \label{over->disc}
Let $X$ be a $d$-dimensional pure simplicial complex, equipped with a type function $\tau : X(0) \rightarrow [d]$
and let $H= H(X)$ be its corresponding $(d+1)$-partite hypergraph.
Assume that $Disc(H) \leq (c_d)^{d+1} - \epsilon$ for some $\epsilon > 0$, where $c_d$ is Pach's constant.
Then $X$ has the $\epsilon$-geometric overlap property.
\end{cor}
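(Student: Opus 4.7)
The plan is to feed the given embedding $f : X(0) \to \mathbb{R}^d$ into Pach's Theorem type-wise and then read off the conclusion from the discrepancy bound. Given an embedding $f$, set $P_i := f(V_i) \subset \mathbb{R}^d$ for $i=0,\ldots,d$. Since $f$ is injective and the $V_i$ are disjoint, the $P_i$ are disjoint point sets, so Pach's Theorem produces a point $p \in \mathbb{R}^d$ and subsets $Q_i \subseteq P_i$ with $|Q_i|/|P_i| \ge c_d$ such that every geometric $d$-simplex with exactly one vertex in each $Q_i$ contains $p$. (If Pach's statement requires general position, one can first perturb $f$ by an arbitrarily small isotopy; the conclusion for the perturbed map transfers back, since the set of chambers whose affine extension covers $p$ is lower-semicontinuously preserved under small perturbations and the fraction we will extract is bounded below strictly.)

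Next I would pull the $Q_i$'s back to the complex: let $W_i := f^{-1}(Q_i) \subseteq V_i$, so $|W_i|/|V_i| = |Q_i|/|P_i| \ge c_d$. By construction, every chamber $\sigma \in E(W_0,\ldots,W_d)$ has its $f$-image a geometric $d$-simplex with one vertex in each $Q_i$, hence its affine extension contains $p$. Therefore the fraction of chambers of $X$ whose affine image covers $p$ is at least $|E(W_0,\ldots,W_d)|/|E|$.

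Finally, apply the discrepancy hypothesis to this particular choice of $W_i$'s. From the definition of $\mathrm{Disc}(H)$ we get
\begin{equation*}
\frac{|E(W_0,\ldots,W_d)|}{|E|} \;\ge\; \prod_{i=0}^{d} \frac{|W_i|}{|V_i|} - \mathrm{Disc}(H) \;\ge\; c_d^{\,d+1} - \bigl(c_d^{\,d+1} - \epsilon\bigr) \;=\; \epsilon,
\end{equation*}
which says exactly that $p$ is covered by at least an $\epsilon$-fraction of the affine extensions of chambers of $X$. Since $f$ was arbitrary, $X$ has the $\epsilon$-overlap property.

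There is no serious obstacle here: the argument is essentially a one-line chaining of Pach's Theorem with the discrepancy inequality. The only minor point of care is the pull-back step, ensuring that the hypergraph subsets $W_i$ have the same relative density as the geometric subsets $Q_i$; this is immediate because $f$ is injective, which is also what guarantees that the $P_i$ are genuinely disjoint point sets eligible for Pach's Theorem. Any general-position issue can be absorbed by a generic perturbation of $f$ as indicated.
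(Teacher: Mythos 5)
Your proposal is correct and follows essentially the same route as the paper: apply Pach's Theorem to the type-wise images $P_i=f(V_i)$, pull the sets $Q_i$ back via the injective $f$ to get $W_i$ with the same relative density, and conclude with the discrepancy inequality; the general-position/perturbation remark is unnecessary since Pach's Theorem as stated requires no such hypothesis, but it does no harm.
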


\begin{proof}
Let $f : X(0) \rightarrow \mathbb{R}^d$ be an embedding.
For $i=0,\ldots,d$, let $V_i$ be the set of $i$-type vertices in $H$ and denote $P_i := f(V_i)$.
Then by Pach's Theorem there exists $p \in \mathbb{R}^d$ and $Q_i \subseteq P_i$, $\frac{|Q_i|}{|P_i|} \geq c_d$,
such that, if $W_i := f^{-1}(Q_i) \subseteq V_i$, then the affine extension of the image under $f$, of each edge (chamber) in $E(W_0,\ldots,W_d)$, covers $p$. 
Since $f$ is an embedding, for any $i$ we have $\frac{|W_i|}{|V_i|} = \frac{|Q_i|}{|P_i|} \geq c_d$.
Finally, by the definition of the discrepancy, we get
$$\frac{|\{\sigma \in X(d)| p \in conv(f(\sigma)) \}|}{|X(d)|} \geq \frac{|E(W_0,\ldots,W_d)|}{|E|} \geq \prod_{i=0}^d \frac{|W_i|}{|V_i|} - Disc(H) 
\geq \epsilon$$.
\end{proof}

So, if we could show that the discrepancy is sufficiently small for our complexes, we are done.
Recall that in the graph case, the Mixing Lemma can be stated in the following way:
The discrepancy of a graph is bounded by the normalized second largest eigenvalue of the adjacency operator of the graph (see \cite{EGL} Lemma 3.2 and Corollary 3.4).
We will use a form of a hypergraph Mixing Lemma, in order to bound the discrepancy by the normalized second largest eigenvalue of some induced graphs.
To this end let us first define these induced graphs.

\begin{dfn}
For any $i \in [d]$, denote $V_i=\tau^{-1}(\{i\}) \subset X(0)$ the $i$-type vertices, 
$W_i= \{F \in X(d-1)| \tau(F) =  [d] \setminus \{i\} \} \subset X(d-1)$ the $i$-cotype walls (wall = $(d-1)$-face). 
Define a bipartite graph $B_i$ whose two sets of vertices are $V_i$ and $W_i$, and there is an edge between $v \in V_i$
and $F \in W_i$ if $F \cup \{v\} \in X(d)$. We call $B_i$ the $i$-induced bipartite graph of \textit{vertices versus walls}.
\end{dfn}

Say that $X$ is \textit{type-regular}, if for any $I \subseteq J \subseteq [d]$ there is $k_{I,J} \in \mathbb{N}$
such that each $I$-type face in $X$ is contained in exactly $k_{I,J}$ faces of type $J$.
Note that the Bruhat-Tits buildings are type-regular. 

Now, the hypergraph Mixing Lemma is as follows.
\begin{pro}\cite[Corollary~3.7]{EGL}  \label{disc->B}
Let $X$ and $H$ be as above and assume further that $X$ is type-regular.
For $i=0,\ldots,d$, let $B_i$ be the the $i$-induced bipartite graph of vertices versus walls,
and let $\tilde{\lambda}(B_i)$ be the normalized second largest eigenvalue of the adjacency operator of the graph $B_i$. Then
\begin{equation}
Disc(H) \leq d \cdot \max_{i=0,\ldots,d} \tilde{\lambda}(B_i)
\end{equation}
\end{pro}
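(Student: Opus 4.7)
The plan is to reduce the hypergraph bound to a telescoping sum of bipartite graph discrepancies, each of which is handled by the graph-case expander mixing lemma for $B_i$ (Lemma 3.2 and Corollary 3.4 of \cite{EGL}). First I would fix subsets $U_i \subseteq V_i$ (the ``$W_i$'' of the $Disc(H)$ definition, renamed to avoid clashing with the notation for walls), set $\rho_i = |U_i|/|V_i|$, and for $0 \leq k \leq d+1$ introduce the hybrid quantities
$$\beta_k = \frac{|E(U_0,\ldots,U_{k-1},V_k,\ldots,V_d)|}{|E|} \cdot \prod_{j=k}^{d} \rho_j,$$
so that $\beta_0 = \prod_j \rho_j$ and $\beta_{d+1}=|E(U_0,\ldots,U_d)|/|E|$. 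The quantity inside $Disc(H)$ is then $\beta_{d+1}-\beta_0=\sum_{k=0}^d (\beta_{k+1}-\beta_k)$, and the goal is to bound each summand by $\tilde\lambda(B_k)$, with the $k=0$ term vanishing.

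For fixed $k \ge 1$, a direct computation gives
$$\beta_{k+1} - \beta_k = \prod_{j=k+1}^d\rho_j \cdot \left( \frac{|E(U_0,\ldots,U_k,V_{k+1},\ldots,V_d)|}{|E|} - \rho_k \cdot \frac{|E(U_0,\ldots,U_{k-1},V_k,\ldots,V_d)|}{|E|} \right),$$
and I would interpret the bracket as a discrepancy in $B_k$. Let $A_k \subseteq W_k$ be the cotype-$k$ walls whose $j$-type vertices (for $j<k$) all lie in $U_j$. Type-regularity provides a constant $\mu_k := k_{[d]\setminus\{k\},[d]}$ so that every cotype-$k$ wall lies in $\mu_k$ chambers, giving $|E|=\mu_k|W_k|$ and $\mu_k|A_k|=|E(U_0,\ldots,U_{k-1},V_k,\ldots,V_d)|$, hence
$$\frac{|A_k|}{|W_k|}=\frac{|E(U_0,\ldots,U_{k-1},V_k,\ldots,V_d)|}{|E|}.$$
At the same time, the edge count of $B_k$ equals $|E|$ and edges of $B_k$ between $U_k$ and $A_k$ correspond bijectively to chambers contributing to $E(U_0,\ldots,U_k,V_{k+1},\ldots,V_d)$. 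Applying the bipartite expander mixing lemma to $(U_k,A_k)\subseteq V_k\times W_k$ then bounds the parenthesis by $\tilde\lambda(B_k)\sqrt{\rho_k\cdot|A_k|/|W_k|}\le \tilde\lambda(B_k)$, and $\prod_{j>k}\rho_j\le 1$ yields $|\beta_{k+1}-\beta_k|\le\tilde\lambda(B_k)$.

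The $k=0$ term vanishes for trivial reasons: type-regularity says every $0$-type vertex lies in a common number of chambers, so $|E(U_0,V_1,\ldots,V_d)|/|E|=\rho_0$, which forces $\beta_1=\beta_0$. Summing the remaining $d$ terms gives $Disc(H)\le d\cdot\max_i\tilde\lambda(B_i)$, as the bound holds for every choice of $U_i$'s. The main subtlety I foresee is the bookkeeping of the middle paragraph: one has to verify carefully that the auxiliary sets $A_k$ really do translate the densities appearing in the telescoping bracket into the two sides of $B_k$ in exactly the form demanded by the bipartite mixing lemma, and that the biregularity underlying that lemma (with normalization by $|V_k|$ and $|W_k|$) is a genuine consequence of the type-regularity hypothesis rather than a stronger assumption.
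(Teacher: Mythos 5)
Your proof is correct: the telescoping (hybrid) decomposition, the identification of each bracket as a discrepancy in $B_k$ via the wall sets $A_k$ (with the biregularity of $B_k$ and the counting identities being genuine consequences of type-regularity), and the vanishing of the $k=0$ term all check out, giving $Disc(H)\le\sum_{k=1}^d\tilde{\lambda}(B_k)\le d\cdot\max_i\tilde{\lambda}(B_i)$. The paper itself gives no proof of this statement, quoting it from \cite{EGL} (Corollary~3.7), and your argument is essentially the one used there: iterate the bipartite expander mixing lemma over the type-induced vertices-versus-walls graphs.
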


So, in conclusion, showing that the normalized second largest eigenvalue of the type-induced bipartite graphs of $X$ is small, will imply the geometric expansion.

\section{Bruhat-Tits buildings and their quotients} \label{build}
In this section we recall some properties concerning the Bruhat-Tits buildings and their quotients, 
and collect some facts on the combinatorics of their type-induced bipartite graphs.
For more on Bruhat-Tits buildings see \cite{T}, and on buildings in general, \cite{AB}.

Throughout this paper,  by a reductive group $G$ over a local field $F$, we mean that $G$ is the $F$-rational points 
of a connected reductive linear algebraic group $\tilde{G}$ defined over $F$, and $F$ is a local non-archemidean field of residue degree $q$.

\textbf{The Bruhat-Tits building} $\B =\B(G)$ associated to a reductive group $G$ over a local field, is a simplicial complex equipped with a $G$-simplicial action.
Below is a partial list of properties of the Bruhat-Tits building (see \cite{T}):
\begin{enumerate}

\item \label{sc} \textbf{Simplicial structure.} The building is a simplicial complex, along with a family of subcomplexes, called apartments.
All the apartments are isomorphic to one another and each two simplicies in the building are contained in a common apartment.
The building, and each of his apartments, are infinite, locally finite, connected, pure simplicial complexes, of dimension $d:= rank(G)$,  
equipped with a natural $(d+1)$-type-function on the vertices. 

\item \label{sym} \textbf{Symmetry.} The group $G$ acts by automorphisms on the building.
An automorphism on the building is called type-preserving, if it preserves the natural type-function mentioned earlier.
Let $G_0$ be the subgroup of type-preserving elements in $G$. Then $G_0$ acts in a type-preserving strongly-transitively way on the building (see \cite[\S~6]{AB}),
in particular $G_0$ acts transitively on faces of the same type.

\item \label{reg} \textbf{Regularity.} By property \ref{sym}, the building and each of his apartments are type-regular (see previous section).
Let $W$ be the spherical (finite) Weyl subgroup of the underlying group $\tilde{G}$ (in particular $W$ does not depends on $F$).
The regularity degrees in each apartment depends only on $W$, and not on $q$.  
In an apartment, each wall is contained in exactly $2$ chambers and each vertex is contained in at most $|W|$ chambers.
In the building, each wall is contained in exactly $q+1$ chambers and each vertex is contained in at most $q^{|W|}$ chambers.

\item \label{geo} \textbf{Geometry.} Each apartment is a tessellation of the Euclidean space $\mathbb{R}^d$, and it inherits the structure of a Euclidean metric space.
The building is a complete CAT(0) metric space, in particular contractible. Each apartment is convex inside the building, 
i.e. if an apartment $A$ contains both $\sigma_1$ and $\sigma_2$ then it contains any minimal gallery between them. 
\end{enumerate}

\begin{rem}
Recall that a Bruhat-Tits building is equipped with a natural type-function.
At least one of the types given by the type function, is what is called a special-type,
which means that stabilizers of vertices of this type satisfy the Cartan and Iwasawa decompositions.
These special-types, will be important for us later, since the play a crucial role in Oh's Theorem.
\end{rem}

\begin{dfn}  \label{N_d}
For any $d \in \mathbb{N}$, define $N_d$ to be the maximal size of a finite Coxeter group of rank $d$ (see \cite[\S 1.3]{AB} for a complete classification of such groups).
Note that $N_d$ depends only on $d$ and not on $q$. 

Let $v$ be a vertex in the building and $A$ an apartment containing it. By property \ref{reg}, 
the number of chambers in the apartment $A$ containing $v$ is at most $N_d$, and the number of chambers in the building containing $v$ is at most $q^{N_d}$.
\end{dfn}

\textbf{A finite quotient} of the Bruhat-Tits building, is a quotient of the building modulo the action of a cocompact lattice $\Gamma \leq G$, denoted $\GB$.
If $\Gamma \subset G_0$, i.e. it is type-preserving, then the quotient $\GB$ inherits the type function coming from the building.
We impose on $\B$ the graph metric of its $1$-skeleton, $dist_skl$, and we define the injectivity radius of $\Gamma$ to be 
$r(\Gamma) = \min_{x \in \B(0), 1 \ne \gamma \in \Gamma} dist_{skl}(\gamma.x,x)$.
Note that in a ball of radius $ < \frac{r(\Gamma)}{2} $, the quotient $\GB$ looks exactly like the building $\B$.
So, any local property of the building, holds for our quotients (assuming the injectivity radius is large enough).

\subsection{Combinatorics of the type-induced bipartite graphs}
Let us now collect some combinatorial facts on the type-induced bipartite graphs of the building, which we shall use in the next section.

\begin{dfn}
Let $i\in [d]$ be a fixed type, and let $B=B_i$ be the $i$-induced bipartite graph of vertices versus walls in the building $\B$. 
Denote by $dist_B$, the graph metric of $B$. Let $n \in \mathbb{N}$.

A \textit{$2n$-walk} in $B$ is a sequence $(v_0,v_1,\ldots,v_n)$ of $i$-type vertices such that for each $i=0,\ldots,n-1$, $dist_B(v_i,v_{i+1})=2$,
and the \textit{distance of a $2n$-walk}, $P=(v_0,v_1,\ldots,v_n)$, is defined to be $dist(P):=dist_B(v_0,v_n)$.

Fix an $i$-type vertex $v_0$ as the origin. A $2$-walk $(v,u)$ is called a \textit{1-backstep} (resp. \textit{2-backstep}) w.r.t. $v_0$,
if $ dist_B(v_0,u)=dist_B(v_0,v)$ (resp. $ dist_B(v_0,u)=dist_B(v_0,v)-2$).
\end{dfn}

In the above notations, we get.
\begin{lem} \label{B-in-building}
Let $i\in [d]$ be a fixed type, $B=B_i$ the $i$-induced bipartite graph, and fix an $i$-type vertex $v_0$ as the origin. Then the following holds:\\
1) The bipartite graph $B$, is a $(Q,q+1)$-biregular connected graph, where $Q=Q(q,d,i)$ and $q < Q \leq q^{N_d}$.\\
2) Let $(v,u)$ be a $2$-walk which is a $2$-backstep (w.r.t. $v_0$). Then any apartment which contains $v_0$ and $v$, also contains $u$.\\
3) Let $(v,u)$ be a $2$-walk which is a $1$-backstep (w.r.t. $v_0$), and let $\sigma$ be the wall separating $v$ and $u$.
Let $A$ be an apartment which contains $v_0$ and $v$, and let $A'$ be an apartment which contains $v_0$ and $\sigma$.
Then, either $\sigma \in A$, or $u \in A'$.\\
4) For $j=1,2$, and any $i$-type vertex $v$,
\begin{equation}
\frac{\#( 2\mbox{-walks starting at } v \mbox{, which are } j\mbox{-backstep w.r.t. } v_0)}{\#( 2\mbox{-walks starting at } v)} \leq \left(\frac{N_d}{q}\right)^j
\end{equation}
5) For any $k \leq n \in \mathbb{N}$, 
\begin{equation}
\frac{\#( 2n\mbox{-walks starting at } v_0 \mbox{, of distance } 2k)}{\#( 2n\mbox{-walks starting at } v_0)} \leq \frac{(3N_d)^n}{q^{n-k}}
\end{equation}
\end{lem}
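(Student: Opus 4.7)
Parts (1) and (2) are immediate from the structural properties collected above. For (1), the wall-side degree $q+1$ is Property~\ref{reg}; the vertex-side degree $Q$ equals the number of chambers containing an $i$-type vertex $v$, which is at most $q^{|W|} \leq q^{N_d}$ and at least $q+1$ (any wall of $\B$ through $v$ that is not of $i$-cotype lies in $q+1$ chambers of $\B$, all of which contain $v$). Biregularity is then forced by strong transitivity of $G_0$ on vertices of each type (Property~\ref{sym}), and connectedness comes from gallery-connectedness of the building. For (2), concatenating a minimal gallery from $v_0$ to $u$ with the two $B$-edges $u \to \sigma \to v$ produces a gallery of length $dist_B(v_0, v)$, hence minimal; convexity of apartments (Property~\ref{geo}) then forces this gallery to lie in $A$, so $u, \sigma \in A$.

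For (3), I would split on the forced value of $dist_B(v_0, \sigma)$, which equals either $dist_B(v_0, v) - 1$ or $dist_B(v_0, v) + 1$. In the first case, $\{v\} \cup \sigma$ lies on a minimal $v_0$-to-$v$ gallery, so convexity of $A$ places this chamber, and in particular $\sigma$, inside $A$. In the second case, being a $1$-backstep forces $dist_B(v_0, u) = dist_B(v_0, \sigma) - 1$, so $\{u\} \cup \sigma$ lies on a minimal $v_0$-to-$\sigma$ gallery, and convexity of $A'$ places $u$ inside $A'$.

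For (4), I enumerate $2$-walks from $v$ as paths $(v, \sigma, u)$; there are $Q \cdot q$ of them. For $j = 2$, Part (2) places both $\sigma$ and $u$ in any apartment $A \ni v_0, v$; there are at most $N_d$ walls of $A$ through $v$ (one per chamber of $A$ containing $v$), each with a unique $2$-walk partner in $A$, giving at most $N_d$ 2-backstep paths and the ratio $\leq N_d/(Qq) \leq (N_d/q)^2$. For $j = 1$, I partition by whether $\sigma \in A$: if $\sigma \in A$ there are at most $N_d$ such walls and for each at most $q$ candidate 1-backstep vertices $u$ (one per chamber through $\sigma$ other than $\{v\} \cup \sigma$); if $\sigma \notin A$, Part (3) forces $dist_B(v_0, \sigma) = dist_B(v_0, v) + 1$ and pins $u$ to the $i$-vertex of the chamber through $\sigma$ nearest to $v_0$, so at most one $u$ per such $\sigma$ and at most $Q$ such paths in total. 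The resulting 1-backstep fraction is $O(N_d/q)$, yielding the stated $(N_d/q)^1$ bound.

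Finally, for (5), I would classify each of the $n$ steps of a $2n$-walk as forward, 1-backstep, or 2-backstep, with counts $f + s + b = n$; the endpoint lies at distance $2k$ iff $s + 2b = n - k$. By (4), each fixed sequence of step types contributes a fraction at most $(N_d/q)^s \cdot (N_d/q)^{2b} = (N_d/q)^{n-k}$ of all $2n$-walks, and summing over the at most $3^n$ trinomial patterns yields the bound $3^n \cdot (N_d/q)^{n-k} \leq (3 N_d)^n/q^{n-k}$. The main obstacle is the 1-backstep count in (4): the separating wall $\sigma$ need not lie in any chosen apartment through $v_0$ and $v$, and Part (3) is precisely what compensates, by pinning $u$ down as a canonical projection whenever $\sigma$ escapes.
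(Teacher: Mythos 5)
Your overall route is the same as the paper's: parts (1)--(3) via the regularity and convexity properties of the building, part (4) by confining backsteps to finitely many apartments, and part (5) by classifying the $n$ steps of a $2n$-walk and summing over the at most $3^n$ trinomial patterns. Your strengthening of (2) to conclude that the separating wall also lies in $A$ is in fact what the paper implicitly uses in its own $2$-backstep count, so that part is fine (modulo the same gallery-versus-$B$-path conflation that the paper itself makes).

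The one place where your argument falls short of the stated bound is the $1$-backstep count in (4). Your split gives at most $N_d\cdot q$ walks with $\sigma\in A$, and for $\sigma\notin A$ at most $2Q$ (not $Q$: part (3) only places $u$ in $A'$, and $\sigma$ lies in two chambers of $A'$, so $u$ is one of two vertices, not pinned to one), hence a fraction at most $N_d/Q+2/q$. Using only what part (1) provides, namely $Q>q$, this is $(N_d+2)/q$ rather than $N_d/q$, and the slack propagates to (5), where you would obtain $\bigl(3(N_d+2)\bigr)^n/q^{n-k}$ instead of $(3N_d)^n/q^{n-k}$. The paper gets the exact constant differently: it covers every $1$-backstep by one of $Q+1$ apartments --- the apartment $A\ni v_0,v$ together with, for each of the $Q$ walls $\sigma_t$ adjacent to $v$, an apartment $A_t\ni v_0,\sigma_t$ --- and bounds the number of $2$-walks from $v$ inside any single apartment by $N_d$, giving at most $(Q+1)N_d$ $1$-backsteps and a fraction $(Q+1)N_d/\bigl(Q(q+1)\bigr)\le N_d/q$. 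This is only a constant-factor issue (downstream it would merely change $M_{d,n}$ and $q_0$), but as written your count does not prove the inequalities stated in the lemma; either adopt the paper's apartment covering, or justify a lower bound on $Q$ substantially larger than $q$ (which part (1) does not supply).
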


\begin{proof}
1) By property \ref{reg} of the Bruhat-Tits building, each $i$-type vertex is adjacent to $Q=Q(q,d,i)$ of $i$-cotype walls in $B$, 
and any $i$-cotype wall is adjacent to $q+1$ of $i$-type vertices in $B$, i.e. $B$ is $(q+1,Q)$-biregular bipartite graph.
Again from property \ref{reg}, since each vertex is contained in some wall, $Q \geq q+1$, and since each wall is contained in some chamber, $Q \leq q^{N_d}$.
Also, the building is a chamber complex, i.e. each two chambers are connected by a gallery (see \cite[\S~4]{AB}), hence $B$ is connected.

2) Since $(v,u)$ is a $2$-backstep (w.r.t. $v_0$), then there is a minimal path (w.r.t. the $B$-graph metric), from $v_0$ to $v$ which passes through $u$.
The $B$-graph metric is coarser then the Euclidean metric of $\B$, 
hence there is a minimal gallery from (a chamber containing) $v_0$ to (a chamber containing) $v$ which passes through (a chamber containing) $u$. 
But $A$ contains $v_0$ and $v$, and since each apartment is convex (property \ref{geo}), $A$ also contains $u$.

3) Since $(v,u)$ is a $1$-backstep (w.r.t. $v_0$), and $\sigma$ is the separating wall, then $dist_B(v_0,\sigma)$ is either $dist_B(v_0,v)+1$ or $dist_B(v_0,v)-1$.
If $dist_B(v_0,\sigma)=dist_B(v_0,v)-1$, then there is a minimal gallery from $v_0$ to $v$ which passes through $\sigma$, 
hence by convexity of the apartment (property \ref{geo}), $\sigma$ is contained in $A$.
If $dist_B(v_0,\sigma)=dist_B(v_0,v)+1=dist_B(v_0,u)+1$, then there is a minimal gallery from $v_0$ to $\sigma$ which passes through $u$, 
hence by convexity of the apartment (property \ref{geo}) $u$ is contained in $A'$.

4) By 1) there are $Q\cdot (q+1)$ number of $2$-walks starting from $v$. 

We start by counting how many of them are $2$-backstep w.r.t. $v_0$.
Fix an apartment, $A$, which contains both $v$ and $v_0$. By claim 2), any $2$-backstep (w.r.t. $v_0$) starting from $v$, must be in $A$.
By property \ref{reg}, inside $A$, each vertex is contained in at most $N_d$ chambers and each wall is contained in exactly $2$ chambers,
hence in an apartment, the number of $2$-walks starting from a given vertex is at most $N_d$.
Thus the portion of $2$-backsteps (w.r.t. $v_0$) among all $2$-walks starting at $v$ is 
$\leq \frac{N_d}{Q\cdot (q+1)} \leq \left(\frac{N_d}{q}\right)^2$.

Now let us count the number of $1$-backsteps w.r.t. $v_0$. 
Let $\sigma_1,\ldots,\sigma_Q$ be the $Q$ neighbors (which are $i$-cotype walls) of $v$ in $B_i$.
Let $A_1,\ldots,A_Q$ be apartments, such that $A_t$ contains $v_0$ and $\sigma_t$, and let $A$ be an apartment containing $v_0$ and $v$.
By claim 3), any $1$-backstep (w.r.t. $v_0$) is either contained in one of the $A_t$, or in $A$.
We have shown already that each apartment contains at most $N_d$ number of $2$-walks starting from $v$, 
hence the total number of $1$-backsteps (w.r.t. $v_0$) is at most $(Q+1)\cdot N_d$.
I.e. the portion of $1$-backsteps (w.r.t. $v_0$) among all $2$-walks starting from $v$ is $\leq \frac{(Q+1)\cdot N_d}{Q\cdot (q+1)} \leq \frac{N_d}{q}$.

5) For a $2n$-walk, $P=(v_0,v_1,\ldots,v_n)$, define $\tau_1=\tau_1(P)$ (resp. $\tau_2=\tau_2(P)$) to be the number of 1-backsteps (resp. 2-backsteps) w.r.t. $v_0$ in $P$.
Note that the distance of a $2n$-walk, $P$, is equal to $2n- 2\tau_1(P) - 4\tau_2(P)$.
Denote by $\alpha_{k,n} $ the probability that a random $2n$-walk $P$ is of distance $2k$, 
and consider a random $2n$-walk $P$, as a sequence of $n$ random $2$-walks. Then by claim 4), we get,
$$\alpha_{k,n} = \sum_{2n-2l-4j=2k} Pr_{P}[\tau_1=l \wedge \tau_2=j]
\leq \sum_{l+2j=n-k} {n \choose l,j} \left( \frac{N_d}{q} \right)^{l+2j} \leq 3^n \left( \frac{N_d}{q} \right)^{n-k}$$
where the last inequality follows from $\sum_{l,j} {n \choose l,j} \leq 3^n$, which finishes the proof.
\end{proof}

Next, we collect some properties on the building coming from the action of the group $G$, on the building. 
Before stating these properties, we shall need the following remark.

\begin{rem} \label{G/K}
Let $G_0$ be the group of type preserving elements in $G$, and recall (property \ref{sym}) that $G_0$ acts transitively on same type simplicies in $\B$.
So, if $K$ is the stabilizer of an $i$-type vertex $v_i$, then we can identify the set of $i$-type vertices in $\B$, with the $K$-cosets $G_0/K$.
Moreover, if $Y\subset G_0/K$ is the set of all $i$-type vertices whose distance (in the $B$-metric) from $v_i=eK$ is exactly $2k$, and $x=gK$ is any $i$-type vertex,
then $gY = gKY \subset G_0/K$ is the set of all $i$-type vertices whose distance (in the $B$-metric) from $x=gK$ is exactly $2k$.
\end{rem}

The following, Lemma describes the neighboring relation of the type-induced graphs of the building, in terms of the action of the group $G$.

\begin{lem} \label{neighbour-building}
Let $A$ be an apartment of the building $\B$, and let $C$ be a chamber in $A$. 
Let $i \in [d]$ be a fixed type, and let $v_{sp}$ and $v_i$ be the unique vertices in $C$ of special-type and $i$-type, respectively (it might be that $v_i=v_{sp}$).
For any $k \in \mathbb{N}$, there exists a finite set $\Omega = \Omega_{i,k} \subset G_0$, such that:\\
1)  $\Omega.v_i$ is the set of all $i$-type vertices in $A$ whose $B_i$-distance from $v_i$ is $2k$.\\
2) Let $K=stab_G(v_i)$ be the stabilizer of $v_i$ in $G$. Identify the $i$-type vertices in $\B$ with the $K$-cosets in $G_0/K$.
Then $K\Omega K \subset G_0/K$ is the is the set of all $i$-type vertices whose $B_i$-distance from $v_i=eK$ is exactly $2k$,
and by the above remark, for each $i$-type vertex $gK$, $gK \Omega K$ is the set of all the $i$-type vertices in the building whose $B_i$-distance from $gK$ is $2k$.\\
3) For any $w \in \Omega$, the skeleton-distance (i.e. the distance in the graph metric of the $1$-skeleton of the building) 
from $v_{sp}$ to $w.v_{sp}$, is at least $\frac{2k}{d}$.\\
4) Let $K_0=stab_G(v_{sp})$ be the stabilizer of $v_{sp}$ in $G$, in particular $K_0$ is a good maximal compact subgroup of $G$.
Let $\Lambda$ be the maximal split torus corresponding to the apartment $A$, $\Phi(G,\Lambda)$ the corresponding roots system, 
$\Lambda^+$ the corresponding lattice of weights, and $G_0 = K_0 \Lambda^+ K_0$ the corresponding Cartan decomposition of $G_0$ w.r.t. $K_0$. 
Let $\alpha_{max} \in \Phi(G,\Lambda)$ be the root of maximal height. Then 
$$ \forall w \in \Omega , w = k_1 \cdot a \cdot k_2 \in K_0 \Lambda^+ K_0 = G_0 \; : \; |\alpha_{max}(a)|_F \leq q^{-\frac{2k}{d}}. $$
\end{lem}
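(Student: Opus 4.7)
The plan is to take $\Omega$ to be a finite set of representatives inside the normalizer $N := N_{G_0}(\Lambda)$ of the split torus attached to $A$, so that elements of $\Omega$ stabilize $A$ setwise and act on it through the affine Weyl group $W_{aff}$. Parts (1) and (2) then follow from strong transitivity of $G_0$, while parts (3) and (4) reduce to geometric estimates in the apartment combined with the Cartan-decomposition dictionary.

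For (1): since $N$ surjects onto $W_{aff}$, which acts transitively on $i$-type vertices of $A$ (a consequence of strong transitivity of $G_0$ on apartment-chamber pairs), I pick for each $i$-type $v' \in A$ at $B_i$-distance $2k$ from $v_i$ a representative $w_{v'} \in N$ with $w_{v'} v_i = v'$, and set $\Omega = \{w_{v'}\}$. Finiteness follows from Lemma \ref{B-in-building}(1) and local finiteness of the apartment. For (2): given any $v' \in \B$ at $B_i$-distance $2k$ from $v_i$, the building axioms provide an apartment $A''$ containing $\{v_i, v'\}$, and strong transitivity implies that $K = \operatorname{stab}_G(v_i)$ acts transitively on apartments through $v_i$; hence some $k_0 \in K$ sends $A''$ to $A$, whence $k_0 v' \in A$ is at $B_i$-distance $2k$ from $v_i$, so equals $w v_i$ for some $w \in \Omega$. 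This yields $v' \in K \Omega K/K$; the reverse inclusion is immediate since $K$ fixes $v_i$ isometrically, and the final assertion is Remark \ref{G/K}.

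For (3), the argument proceeds in two steps. First, a minimal gallery from $C$ to $wC$ of length $m$ produces a $B_i$-walk from $v_i$ to $wv_i$ of length at most $2m$ (by tracking the $i$-type vertex along the gallery: it changes only when an $i$-cotype wall is crossed), so $m \geq k$. Second, the gallery bound converts to a 1-skeleton bound via the Euclidean geometry of the apartment: both $d_{gal}$ (between chambers) and $d_{skl}$ (between vertices) are comparable to the Euclidean distance in $A$ up to constants depending on the chamber shape, and a direct computation of these constants in a $d$-dimensional apartment yields $d_{gal}(C, wC) \leq (d/2)\, d_{skl}(v_{sp}, wv_{sp}) + O(1)$, giving $d_{skl}(v_{sp}, wv_{sp}) \geq 2k/d$. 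For (4), write $w = k_1 a k_2 \in K_0 \Lambda^+ K_0$ with $k_1, k_2 \in K_0$ fixing $v_{sp}$, so $d_{skl}(v_{sp}, av_{sp}) = d_{skl}(v_{sp}, wv_{sp}) \geq 2k/d$ by (3). The element $a$ acts on $A$ as translation by a dominant coweight $\nu(a)$ satisfying $|\alpha(a)|_F = q^{-\langle \nu(a), \alpha \rangle}$ for every root $\alpha$. The standard geometric estimate $\langle \nu(a), \alpha_{max}\rangle \geq d_{skl}(v_{sp}, av_{sp})$ (the $\alpha_{max}$-direction dominates the 1-skeleton distance in the apartment) then yields $|\alpha_{max}(a)|_F \leq q^{-2k/d}$.

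The main obstacle is part (3): the precise constant $2/d$ requires a careful comparison between the gallery and 1-skeleton metrics on the $d$-dimensional apartment, reconciling the simplicial structure with the Euclidean geometry. The rest of the proof is largely a matter of combining strong transitivity with the standard Cartan-decomposition formulas.
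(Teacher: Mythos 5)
Parts (1), (2) and (4) of your proposal are broadly in line with the paper (the paper also gets (2) from strong transitivity, realizing a type-preserving isomorphism $A'\to A$ fixing $v_i$ by an element of $K$, and proves (4) by exactly your Cartan/coweight dictionary), but the heart of the lemma is (3), and there your argument has a genuine gap. Your gallery lower bound is only $d_{gal}(C,wC)\ge k$: tracking the $i$-vertex along a gallery gives $B_i$-distance at most $2$ per step, hence $2k\le 2m$. To convert this into $d_{skl}(v_{sp},w v_{sp})\ge 2k/d$ you then assert the comparison $d_{gal}(C,wC)\le \frac{d}{2}\,d_{skl}(v_{sp},wv_{sp})+O(1)$, which is not proved and is in fact false as a general statement about the apartment: already in the $\tilde{A}_2$ apartment (equilateral triangular tiling, $d=2$, every vertex special), translating a chamber by $n$ unit lattice vectors gives skeleton distance $n$ between the corresponding vertices but gallery distance about $2n$ (two of the three families of walls are crossed $n$ times each), so no bound of the form $d_{gal}\le 1\cdot d_{skl}+O(1)$ can hold. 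With any true comparison constant (the paper uses the cruder ``at most $\mathrm{diam}(\mathrm{link})$ gallery steps per skeleton step'') your chain only yields $d_{skl}\gtrsim k/d$, a factor $2$ short of the claimed $2k/d$, and with an additive error the lemma as stated has no slack anyway.

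The missing factor of $2$ is exactly what the paper's construction of $\Omega$ is engineered to supply, and your choice of $\Omega$ inside $N_{G_0}(\Lambda)$ discards it: the paper does not take an arbitrary $w$ with $w.v_i=y$, but picks, for each target vertex $y\in A$, the chamber $C_y\subset A$ containing $y$ at \emph{maximal} gallery distance from $C$ and defines $w$ by $w.C=C_y$; this maximality is what lets it assert $d_{gal}(C,C_y)\ge 2k$ (rather than $k$), after which the skeleton-to-gallery conversion through links of vertices gives $d_{skl}\ge 2k/d$. With your $w$ an uncontrolled representative in $N$, the chamber $w.C$ may well be the chamber of $A$ containing $y$ closest to $C$, for which nothing better than $d_{gal}\ge k$ is available. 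To repair your argument you would either have to build the same maximality into the choice of representatives (and then prove the $\ge 2k$ lower bound, which your ``$B_i$-path inside a gallery'' observation alone does not give), or prove a genuinely sharper metric comparison than the one you asserted, which the $\tilde{A}_2$ computation rules out. A smaller point: in (4) the pairing inequality $\langle \nu(a),\alpha_{max}\rangle \ge d_{skl}(v_{sp},a.v_{sp})$ is also only asserted; the paper derives it by evaluating $\alpha_{max}$ on a basis of dominant coweights, and you should do likewise rather than call it standard.
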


\begin{ex}
In the special case of $PGL_d$, and $k=1$, one can take $\Omega_{i,1}$ to be the singleton $\{diag(\pi,1,\ldots,1,\pi^{-1})\}$ for any $i$.
\end{ex}

\begin{proof}
1) We start by constructing $\Omega$:
Let $\mathcal{V}$ be the set of $i$-type vertices in $A$ whose $B_i$-distance from $v_i$ is exactly $2k$.
For each $y \in \mathcal{V}$, pick a chamber $C_y$ in $A$ which contains $y$, and such that its gallery distance from $C$ is maximal among all chambers in $A$ which contains $y$.
Since $G_0$ acts transitively on chambers, for each $y \in \mathcal{V}$, we can pick $w_y \in G_0$ such that $w_y.C = C_y$, define $\Omega = \{w_y | y \in \mathcal{V} \}$.
Since $\Omega \subset G_0$ and $G_0$ preserves types, for each $w=w_y \in \Omega$, $w.v_i$ is the unique $i$-type vertex in $w.C = C_y$, i.e. $w.v_i = y$,
which proves claim (1).

2) By claim (1), $\Omega.v_i$ is the set of all the $i$-type vertices in the apartment $A$, whose $B_i$-distance from $v_i$ is $2k$.
First, let us show that $K \Omega.v_i$ is equal to the set of all $i$-type vertices in the building whose $B_i$-distance from $v_i$ is $2k$.
It will suffice to show that each $i$-type vertex in the building, $y$, whose $B_i$-distance from $v_i$ is $2k$, is in a $K$-orbit of some vertex from $A$.
For such $y$, pick an apartment $A'$ containing both $v_i$ and $y$, 
and a type-preserving isomorphism $\phi : A' \rightarrow A$ which fixes $v_i$ (such an apartment and an isomorphism, always exists by the axioms of the building).
Then by \cite[Proposition~6.6]{AB}, there is $g \in G_0$ such that for any $F \in A'$, $\phi(F) = g.F$. 
In particular $g.v_i = \phi(v_i)=v_i$ so $g\in K=stab_G(v_i)$, and $g.y = \phi(y) \in A$  as required.
Now, in terms of the identification of $i$-type vertices as $K$-cosets, $K\Omega.v_i = K\Omega K$, and by Remark \ref{G/K}, 
for any $i$-type vertex $x=gK$, $gK \Omega K$ is the set of all the $i$-type vertices in the building whose $B$-distance from $x$ is $2k$, which proves claim (2).

3) Let $w=w_y\in \Omega$, let $y_{sp}=w.v_{sp}$ be the special-type vertex in $C_y=w.C$, 
and let $v_{sp}=y_1,\ldots,y_r=y_{sp}$ be a minimal path in the $1$-skeleton of the building.
Choose a sequence of chambers, $C=C_1,\ldots,C_r=C_y$, such that $C_i$ contains the edge $\{y_i,y_{i+1}\}$ for each $i<r$.
Now, any pair of consecutive chambers, $C_i$, $C_{i+1}$, have a common vertex, $y_{i+1}$, i.e. both chambers induce a pair of chambers in the link of $y_{i+1}$.
By \cite[Proposition~4.9]{AB} the link of any vertex is a $(d-1)$-dimensional finite building, 
and by \cite[Proposition~1.57, Corollary~4.34]{AB}  such a building has diameter $d$, i.e. any two chambers in it are of gallery-distance at most $d$. 
Hence, for any $i<r$, there is a gallery of length $k_i\leq d$ from $C_i$ and $C_{i+1}$ (which is a lift of a gallery between their induced chambers in the link),
 $C_i=C_i^1,\ldots,C_i^{k_i}=C_{i+1}$. Concatenating all these galleries, we get a gallery of length $\sum_{i=1}^{r-1} k_i \leq r\cdot d$, from $C$ to $w.C=C_y$.
However, since any gallery of chambers contains in it a $B_i$-path, and by the definition of $C_y$ the gallery-distance between $C$ and $w.C=C_y$ is at least $2k$,
we get that $r\cdot d \geq 2k$, which proves claim (3).

4) First, note that if $g = k_1 a k_2 \in K_0 \Lambda^+ K_0= G_0$ is the Cartan decomposition of $g$, 
then $a.v_{sp}$ is the unique special-type vertex in the apartment $A= A(\Lambda)$ which is in the $K_0$-orbit of $g.v_{sp}$.
Second, if $z_1,\ldots,z_d \in \Lambda^+$ is a basis of simple weights in $\Lambda^+$,  
then $z_t.v_{sp}$ is of skeleton-distance $2$ from $v_{sp}$, and $\alpha_{max}(z_t) = \pi^2$, for any $t=1,\ldots,d$.
In particular for any $a = \sum_{t=1}^d n_t \cdot z_t \in \Lambda^+$, 
$$|\alpha_{max}(a)|_F \leq q^{-2\cdot \sum_{t=1}^d |n_t|} \leq q^{-dist_{skl}(a.v_{sp},v_{sp})},$$
where $dist_{skl}$ is the skeleton-distance.
So,  for any $w \in \Omega$, if $w = k_1 a k_2 \in K_0 \Lambda^+ K_0= G_0$ is its Cartan decomposition, $a = \sum_{t=1}^d n_t \cdot z_t \in \Lambda^+$, 
then $w.v_{sp} = a.v_{sp}$ and by claim (3) we get
$$ |\alpha_{max}(a)|_F  \leq q^{-dist_{skl}(a.v_{sp},v_{sp})} = q^{-dist_{skl}(w.v_{sp},v_{sp})} \leq q^{-\frac{2k}{d}}.$$
which proves claim (4).
\end{proof}

\section{Second Largest Eigenvalue and Oh's Theorem} \label{spec}
The object of this section is to give a bound on the normalized second largest eigenvalue, $\tilde{\lambda}$, 
of the type-induced bipartite graphs of the finite quotient $\GB$ of the building. 
This will be done by introducing the concept of Hecke operators, which will form a bridge between the representation data ($L^2(\GG)$),
and the combinatorial data $(\tilde{\lambda})$. Applying then Oh's Theorem will give us the needed bound on the second largest eigenvalues of the type induced graphs.

Throughout this section $\GB$ is a finite quotient of the Bruhat-Tits building, which inherit the type-function coming from the building, i.e. $\Gamma \subset G_0$.
Fix once and for all, a type $i \in [d]$, $B=B_i$ the type-induced bipartite graph of $i$-vertices, $V=V_i$, versus $i$-cotype walls, $W=W_i$.

\subsection{distance adjacency operators}
For each $k \in \mathbb{N}$ define the operator $A_k : L^2(V) \rightarrow L^2(V)$ to be the normalize averaging operator on all the vertices of distance exactly $2k$ 
in the bipartite graph $B$, more explicitly 
$$A_kf(x) = \frac{1}{|\{y \in V | dist_{B}(y,x) = 2k\}|} \sum_{dist_{B}(y,x) = 2k} f(y) .$$
Call $A_k$ the normalize $2k$-distance operator of $B$ (starting from $V$).

\begin{pro} \label{B->dist}
If $\Gamma$ is type-preserving of injective radius $r(\Gamma) > 4n$, for some $n \in \mathbb{N}$, then
\begin{equation}
(\tilde{\lambda}(B))^{2n} \leq \sum_{k=0}^n (3N_d)^n \cdot q^{k - n} \cdot \|A_k \|_{L_0^2(V)}
\end{equation}
where $\| \cdot \|$ is the operator norm $L^2_0(V)$ is the subspace perpendicular to the constant function $1_{V}$,
and $N_d$ is defined in the previous section.
\end{pro}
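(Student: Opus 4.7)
The plan is to identify $\tilde\lambda(B)^{2n}$ with the $L_0^2(V)$-operator norm of a normalized $2n$-walk averaging operator on $V$, to decompose that operator according to the $B$-distance of the endpoint from the start, and then to plug in the walk-count bound of Lemma \ref{B-in-building}(5).

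More precisely, let $N:L^2(V)\to L^2(W)$ be the bipartite adjacency operator of $B$, and set $M:=\tfrac{1}{Q(q+1)}N^*N$. Then $M$ is the normalized $2$-walk averaging operator on $V$, self-adjoint, with spectrum equal to the squared normalized singular values of $B$. In particular $M$ has largest eigenvalue $1$ (with the constant eigenvector) and
\begin{equation*}
\|M^n\|_{L_0^2(V)}=\tilde\lambda(B)^{2n}.
\end{equation*}
Iterating shows that $M^n f(v)$ is the average of $f$ over the endpoints of all $2n$-walks starting at $v$, in the sense of Lemma \ref{B-in-building}.

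I would then decompose the $2n$-walks by the $B$-distance $2k$ of their endpoint from the start, writing $M^n=\sum_{k=0}^n T_k$. The injectivity radius hypothesis $r(\Gamma)>4n$ makes every ball of $B$-radius $2n$ in $\GB$ isometric to a ball in $\B$, so the walk counts in $\GB$ agree with those in $\B$ and the strong transitivity of the $G_0$-action on $\B$ (property \ref{sym}) transfers to $\GB$. Granted the identity $T_k=\alpha_{k,n}A_k$, where $\alpha_{k,n}$ denotes the fraction of $2n$-walks of distance $2k$, the triangle inequality on $L_0^2(V)$ combined with the bound $\alpha_{k,n}\le(3N_d)^n q^{k-n}$ from Lemma \ref{B-in-building}(5) gives
\begin{equation*}
\tilde\lambda(B)^{2n}=\|M^n\|_{L_0^2(V)}\le\sum_{k=0}^n\alpha_{k,n}\|A_k\|_{L_0^2(V)}\le\sum_{k=0}^n(3N_d)^n q^{k-n}\|A_k\|_{L_0^2(V)},
\end{equation*}
which is the stated inequality.

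The delicate step is the identity $T_k=\alpha_{k,n}A_k$; equivalently, that the number of $2n$-walks from $v$ to $y$ depends only on $d_B(v,y)=2k$ and not on the finer $G_0$-equivalence class of the pair. This is automatic when the stabilizer $K=\mathrm{stab}_{G_0}(v)$ acts transitively on the sphere $S_{2k}(v)$, but in general that sphere splits into several $K$-orbits $\omega$, indexed via Remark \ref{G/K} by Cartan double cosets $KgK$. In that case one needs the finer decomposition $M^n=\sum_\omega\gamma_\omega A_\omega$ over $K$-orbits together with a repackaging of the orbit averages $A_\omega$ into the coarser uniform sphere averages $A_k$ using the convexity relation $A_k=\sum_{\omega\subset S_{2k}(v)}(|\omega|/|S_{2k}(v)|)A_\omega$ -- this bundling, naturally handled in the Hecke-operator framework of \S\ref{spec}, is the main technical obstacle.
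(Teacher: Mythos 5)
Your argument is essentially the paper's own proof: it, too, writes $\tilde\lambda(B)^{2n}=\|(T^tT)^n\|_{L_0^2(V)}$ for the normalized $2$-walk operator, decomposes the $2n$-walk operator according to the $B$-distance of the endpoint, and combines Lemma \ref{B-in-building}(5) with the injectivity-radius hypothesis to transfer the walk-count bound from $\B$ to $\GB$. The ``delicate step'' you single out (that the walk count to $y$ depend only on $dist_B(v,y)$, not on the $K$-orbit of $y$) is exactly the point the paper passes over with ``by the definition of $A_k$'' and then effectively absorbs into Lemma \ref{dist->hecke}, where $A_k$ is expressed through the double-coset (Hecke) decomposition; so your write-up follows the same route and is, if anything, more explicit about where transitivity of $K$ on spheres would be used.
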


\begin{proof}
Since $B$ is bipartite, its normalized adjacency operator is a matrix of the form 
$\left( \begin{array}{cc} 0 & T \\ T^t & 0 \end{array} \right)$, where $T$ is an operator from $ L^2(V) $ to $ L^2(W)$, and $T^t$ is its transpose.
Therefore $\tilde{\lambda}(B) = \sqrt{\|T^tT\|_{L^2_0(V)}}$. Now, $T^tT$ is the normalized adjacency operator of the 2-walk in $B$, starting from a vertex in $V$.
Therefore, $(T^tT)^n$ is the  normalized adjacency operator of the $2n$-walk in $B$, starting from a vertex in $V$.
So, by the definition of $A_k$, 
\begin{equation}
(T^tT)^n = \sum_{k=0}^n \alpha_{k,n} A_k
\end{equation} 
where $\alpha_{k,n}$ is the portion of $2n$-walks which ends at distance exactly $2k$, among all such $2n$-walks, starting from some $i$-type vertex in the quotient
(a priori, $\alpha_{k,n}$ should depends on the choice of the starting $i$-type vertex, however we shall see shortly that this is not the case).

By Lemma \ref{B-in-building}, in the building, $\alpha_{k,n} \leq (3N_d)^n \cdot q^{k - n}$ (independently of the choice of the starting $i$-type vertex !),
and since we are dealing with walks of length $\leq 2n$ in $\GB$, and since $r(\Gamma) > 4n$, 
i.e. up to distance $2n$ the quotient looks exactly like the covering building, the same claim holds for the quotient.
\end{proof}

\subsection{Hecke operators}
Let $G_0 \leq G$ be the type-preserving subgroup, and let $K \leq G_0$ be the stabilizer of an $i$-type vertex. 
Recall that we can identify the set of $i$-type vertices in $\B$ with $G_0/K$.
For any $w \in G_0$, define the normalize $w$-Hecke operator $H_w : L^2(G_0 /K) \rightarrow L_2(G_0 /K)$, 
$$H_wf(xK) = \frac{1}{|KwK/K|} \sum_{yK \in xKwK} f(yK).$$
The action (from the left) of $\Gamma \leq G_0$ on $L_2(G_0 /K)$ commutes with the action $H_w$.
Hence, $H_w$ can be thought of as a map $H_w : L^2(V) \rightarrow L^2(V)$, where $V=V_i$ is the set of $i$-type vertices in $\GB$, so
$$H_wf(\Gamma xK) = \frac{1}{|KwK/K|} \sum_{\Gamma yK \in \Gamma xKwK} f(\Gamma yK).$$

The next claim gives an interpretation of the normalize $2k$-distance operator in terms of Hecke operators.
\begin{lem} \label{dist->hecke}
Let $k \in \mathbb{N}$ such that $r(\Gamma) > 4k$ and let $\Omega = \Omega_{i,k}$ be as in Lemma \ref{neighbour-building}. 
Then the normalized $2k$-distance operator $A_k$, of the $i$-induced bipartite graph of $\GB$, is a convex sum of $w$-Hecke operators over $w \in \Omega$, in particular
\begin{equation}
\|A_k \|_{L_0^2(V)} \leq \max_{w \in \Omega} \| H_w\|_{L_0^2(V)}
\end{equation}
\end{lem}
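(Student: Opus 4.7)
The plan is to realize the $B$-distance-$2k$ sphere around any $i$-type vertex as a disjoint union of right cosets inside double cosets $KwK/K$, and then simply unfold the averaging operator $A_k$ into the corresponding convex combination of Hecke operators.

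First, fix the $i$-type vertex $v_i$ and identify the $i$-type vertices of $\B$ with $G_0/K$ via $g.v_i \leftrightarrow gK$. By Lemma \ref{neighbour-building}(2), the set of $i$-type vertices at $B$-distance exactly $2k$ from $gK$ is $gK\Omega K/K$. Writing this as a union of double cosets, I may replace $\Omega$ by a subcollection $\Omega' \subseteq \Omega$ containing one representative for each distinct double coset appearing; since the statement we want only involves a maximum over $\Omega$, passing from $\Omega'$ back to $\Omega$ at the end is harmless. Then
\[ K\Omega K/K \;=\; \bigsqcup_{w \in \Omega'} KwK/K, \qquad N_k \;:=\; |K\Omega K/K| \;=\; \sum_{w \in \Omega'} |KwK/K|. \]

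Second, compute $A_k$ on the building. For any $g \in G_0$,
\[ A_k f(gK) \;=\; \frac{1}{N_k} \sum_{hK \in gK\Omega K/K} f(hK) \;=\; \frac{1}{N_k} \sum_{w \in \Omega'} \sum_{hK \in gKwK/K} f(hK) \;=\; \sum_{w \in \Omega'} \frac{|KwK/K|}{N_k} \, H_w f(gK). \]
Hence $A_k = \sum_{w \in \Omega'} c_w H_w$ with nonnegative coefficients $c_w = |KwK/K|/N_k$ summing to $1$, a genuine convex combination.

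Third, descend to $\GB$. The injectivity hypothesis $r(\Gamma) > 4k$ is exactly what is needed so that the covering map $G_0/K \twoheadrightarrow \Gamma\backslash G_0/K$ is bijective on every $B$-ball of radius $2k$; in particular each fibre $gKwK/K$ injects into $\Gamma gKwK/K$ with the same cardinality, so the formula above passes verbatim to $L^2(V)$. Since $A_k$ and each $H_w$ commute with the left $\Gamma$-action and annihilate the constant, they restrict to $L^2_0(V)$, and the triangle inequality together with the convexity of the coefficients gives
\[ \|A_k\|_{L^2_0(V)} \;\leq\; \sum_{w \in \Omega'} c_w \|H_w\|_{L^2_0(V)} \;\leq\; \max_{w \in \Omega'} \|H_w\|_{L^2_0(V)} \;\leq\; \max_{w \in \Omega} \|H_w\|_{L^2_0(V)}, \]
as required.

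The only delicate point is the bookkeeping for possible repetitions among the double cosets $\{KwK : w \in \Omega\}$; this is handled cleanly by the passage to $\Omega'$ and is a nuisance rather than a real obstacle. The substantive input is Lemma \ref{neighbour-building}(2), which translates the $B$-sphere into the double-coset language; once that is in hand, the identification of $A_k$ as a convex average of Hecke operators is immediate from the definitions and the injectivity radius hypothesis.
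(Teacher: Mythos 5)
Your proposal is correct and follows essentially the same route as the paper: identify the $B$-distance-$2k$ sphere with $gK\Omega K/K$ via Lemma \ref{neighbour-building}(2), pass to representatives $\Omega'$ of the double cosets to write $A_k$ as a convex combination of the $H_w$, use $r(\Gamma)>4k$ to transfer the formula to the quotient, and conclude by the triangle inequality. The only differences are cosmetic (explicitly noting the disjointness of the double cosets and the restriction to $L^2_0(V)$), so there is nothing to add.
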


\begin{proof}
Under the identification of $i$-type vertices in the building with cosets in $G_0/K$, by Lemma \ref{neighbour-building} (2) we get that:
For any $i$-type vertex $gK$ in the building, the set of $i$-type vertices in the building of distance $2k$ (in $B=B_i$) from $gK$, is equal to $gK\Omega K$. 
Moreover, since $r(\Gamma) > 4k$, up to distance $2k$ the quotient $\GB$ looks exactly like the covering building, so 
For any $i$-type vertex $\Gamma gK \in V$ in quotient, the set of vertices in $V$ of distance $2k$  from $\Gamma gK$, is equal to $\Gamma gK\Omega K$. 
Therefore, if we take $\Omega' \subset \Omega$ a set of representatives of $K \backslash \Omega / K$, then
$$A_k f(xK) = \frac{1}{|K\Omega K/K|} \sum_{\Gamma yK \in xK\Omega K} f(\Gamma yK) = \sum_{w\in \Omega'} \frac{|KwK/K|}{|K\Omega K/K|} \cdot H_w f(\Gamma xK).$$
In particular, 
$$\|A_k \|_{L_0^2(V)} \leq \sum_{w\in \Omega'} \frac{|KwK/K|}{|K\Omega K/K|} \cdot \| H_w\|_{L_0^2(V)} \leq \max_{w \in \Omega} \| H_w\|_{L_0^2(V)}.$$
\end{proof}

Let us now consider the $G$-unitary representation $(\rho,L^2(\GG))$, given by $\rho(g)f(x) = f(xg)$ for $g \in G$ and $f \in L^2(\GG)$.
It turns out that the normalized Hecke operators on $\GB$ are connected to the matrix coefficients of $L^2(\GG)$ in the following way:
let us identify $L^2(V) \cong L^2(\GG_0/K)$ as the subspace of $K$-invariant functions in $L^2(\GG_0)$, and then extend them by zeros to all of $\GG$.
Take on $\GG$ the $G$-invariant measure induced from the Haar measure $\mu$ of $G$ with $\mu(K)=1$, which turn unit vectors in $L^2(V)$ into unit vectors in $L^2(\GG)$.

The following three lemmas appears in \cite[\S~4.2]{EGL}. There it was specialized to the case $G=PGL_d(F)$.
However, the proofs there holds also in our more general setting.

\begin{lem} \cite[Lemma~4.2]{EGL} \label{hecke1}
Let $f_1,f_2 \in L^2(V)$ be two vectors, considered as two $K$-invariant vectors in $L^2(\GG)$ and $w \in G_0$. Then
\begin{equation}
\langle H_w f_1, f_2 \rangle = \langle \rho(w) f_1, f_2 \rangle  
\end{equation}
\end{lem}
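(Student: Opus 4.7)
The plan is to unfold the matrix coefficient $\langle \rho(w) f_1, f_2 \rangle$ into the combinatorial expression defining $H_w f_1$, using only the right $K$-invariance of $f_1$ and $f_2$, the normalization $\mu(K) = 1$, and the right $K$-invariance of $\mu$ on $G$ (which holds because the modular function is trivial on the compact subgroup $K$).

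First, I would start from
$$\langle \rho(w) f_1, f_2 \rangle = \int_{\GG} f_1(xw)\, \overline{f_2(x)}\, d\mu(x),$$
insert the harmless factor $\int_K dk = 1$ into the integral, and for each fixed $k$ substitute $x \mapsto xk$. The substitution preserves $d\mu$, and the right $K$-invariance of $f_2$ cancels the resulting $\overline{f_2(xk)}$ back to $\overline{f_2(x)}$. After Fubini this gives
$$\langle \rho(w) f_1, f_2 \rangle = \int_{\GG} \left( \int_K f_1(xkw)\, dk \right) \overline{f_2(x)}\, d\mu(x).$$
The heart of the argument is then to identify the inner $K$-average with $H_w f_1(\Gamma x)$. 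Decomposing $KwK = \bigsqcup_{i=1}^n w_i K$ with $n = |KwK/K|$, right $K$-invariance of $f_1$ gives $f_1(xkw) = f_1(xw_i)$ whenever $kw \in w_i K$, so provided the sets $E_i = \{k \in K : kw \in w_i K\}$ each have $dk$-measure $1/n$, the inner integral collapses to $\tfrac{1}{n}\sum_i f_1(xw_i) = H_w f_1(\Gamma x)$.

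The one step with real content is the equal-mass claim for the $E_i$, and I expect this to be the main (and essentially only nontrivial) obstacle. It reduces to transitivity of the left $K$-action on the finite set $KwK/K$: for any $w_i, w_j$, writing $w_i = k_i w k_i'$ and $w_j = k_j w k_j'$ with $k_*, k_*' \in K$, one has $(k_j k_i^{-1}) \cdot w_i K = w_j K$. By left-invariance of $dk$ the sets $E_i$ therefore all have the same measure, and since they partition $K$ each has measure $1/n$. Plugging back yields $\langle \rho(w) f_1, f_2\rangle = \langle H_w f_1, f_2\rangle$; aside from a small initial check that $L^2(V)$ injects into $L^2(\GG)$ in a way compatible with both $\rho(w)$ (using $w \in G_0$ and $\Gamma \leq G_0$) and $H_w$, the rest is pure change-of-variables bookkeeping.
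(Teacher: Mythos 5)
Your unfolding argument is correct, and it is essentially the paper's proof: the paper establishes this lemma only by citing \cite[Lemma~4.2]{EGL} (noting the proof there, given for $PGL_d$, carries over verbatim), and that proof is exactly this standard computation — insert a $K$-average into the matrix coefficient, decompose $KwK=\bigsqcup_i w_iK$, and use left-invariance of Haar measure on $K$ to see each fiber $E_i$ has mass $1/|KwK/K|$, so the inner average becomes $H_wf_1$. Your handling of the equal-mass step and of the support/compatibility issues (using $w\in G_0$, $\Gamma\leq G_0$, $\mu(K)=1$, and unimodularity for right $K$-invariance of the quotient measure) is sound, so nothing further is needed.
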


\begin{rem} \label{rem-4.3}
Let $G^+ \leq G$ be the subgroup generated by unipotent elements. Let $v$ be any fixed vertex in the building.
Then $G_0 = G^+ \cdot K$, where $K$ is the stabilizer of $v$.
Indeed, by the structure of reductive groups, $G= G^+ \cdot \Lambda$, where $\Lambda$ is any maximal split torus, and $G_0 = G^+ \cdot (\Lambda \cap G_0)$. 
Let $I \subset K$ be an Iwahori subgroup (= point-wise stabilizer of a chamber containing $v$), 
and take $\Lambda \leq G$ to be a maximal torus, such that $I$ has an Iwahori decomposition w.r.t. $\Lambda$, in particular $\Lambda_0:=\Lambda \cap G_0 \subseteq I$. 
Then $G_0 = G^+ \cdot (\Lambda \cap G_0) \subseteq G^+ \cdot I \subseteq G^+ \cdot K \subseteq G_0$.
\end{rem}

Combined with the above remark, the proof of Lemma~4.3 in \cite{EGL}, holds for any reductive group, $G$, and any stabilizer of a vertex in the building, $K \leq G$.
\begin{lem}\cite[Lemma~4.3]{EGL} \label{hecke2}
Let $f \in L^2_0(V)$ and consider it as a $K$-invariant vectors in $L^2(\GG)$. 
Then $f$ is orthogonal to the space of $G^+$-invariant vectors in $L^2(\GG)$.
\end{lem}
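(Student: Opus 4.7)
The plan is to exploit the compactness of $K$ together with the normality of $G^+$ in $G$ in order to reduce $h$ to an object that is constant on $\GG_0$, after which the pairing with $f$ can be read off directly from the hypothesis $f \in L^2_0(V)$.

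Given any $G^+$-invariant vector $h \in L^2(\GG)$, I would first form the auxiliary vector $h_K := \int_K \rho(k)\,h\,dk$, using the normalized Haar measure on $K$. Because $f$ is $K$-invariant, a one-line change of variable gives $\langle f, \rho(k) h \rangle = \langle f, h \rangle$ for every $k \in K$, and hence $\langle f, h \rangle = \langle f, h_K \rangle$; so it suffices to understand $h_K$. By construction $h_K$ is right-$K$-invariant. The next step is to verify that $h_K$ is still $G^+$-invariant, and here the key input is that $G^+$, being generated by unipotent elements, is normal in $G$ (since conjugation preserves unipotency). Hence for any $g_+ \in G^+$,
\begin{equation*}
\rho(g_+) h_K \;=\; \int_K \rho(g_+ k)\, h\,dk \;=\; \int_K \rho(k)\,\rho(k^{-1} g_+ k)\, h\,dk \;=\; \int_K \rho(k)\, h\,dk \;=\; h_K,
\end{equation*}
using $k^{-1} g_+ k \in G^+$ and the $G^+$-invariance of $h$. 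Invoking Remark \ref{rem-4.3}, which yields $G_0 = G^+ \cdot K$, we conclude that $h_K$ is right-$G_0$-invariant.

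To finish, observe that $\GG_0 \subset \GG$ is precisely the right $G_0$-orbit of the identity coset (since $\Gamma \subset G_0$), so $h_K$ equals some constant $c$ on $\GG_0$. As $f$ was extended by zero outside $\GG_0$,
\begin{equation*}
\langle f, h \rangle \;=\; \langle f, h_K \rangle \;=\; \overline{c}\,\int_{\GG_0} f(x)\, d\mu(x),
\end{equation*}
and the assumption $f \in L^2_0(V)$ forces this integral to vanish: under the normalization $\mu(K) = 1$ fixed just before the lemma, integration of $K$-invariant functions on $\GG_0$ against $d\mu$ pulls back to a constant multiple of the counting measure on $V \cong \Gamma \backslash G_0 / K$, so perpendicularity of $f$ to $1_V$ is exactly $\int_{\GG_0} f\, d\mu = 0$. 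Therefore $\langle f, h \rangle = 0$, as required.

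I do not expect any genuine obstacle beyond bookkeeping: the only place where the generality of the reductive group $G$ (as opposed to the $PGL_d$ case in \cite{EGL}) enters is the decomposition $G_0 = G^+ K$, already packaged in Remark \ref{rem-4.3}. The most delicate point to keep track of is the normality of $G^+$ under conjugation by elements of $K$, which is exactly what allows the $K$-averaging to be compatible with $G^+$-invariance simultaneously.
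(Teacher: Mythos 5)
Your proof is correct and follows essentially the same route the paper intends: the paper simply invokes \cite[Lemma~4.3]{EGL} together with Remark \ref{rem-4.3}, and the decomposition $G_0 = G^+ \cdot K$ stated there is exactly the input you combine with normality of $G^+$ and averaging over $K$ to make a $G^+$-invariant vector right-$G_0$-invariant, hence constant on $\GG_0$, after which orthogonality to $1_V$ finishes. The only point worth making explicit is that equating $\int_{\GG_0} f\,d\mu$ with the pairing $\langle f, 1_V\rangle$ uses that $\Gamma$ acts freely on vertices (guaranteed by the positive injectivity radius), so each fiber $\Gamma\backslash \Gamma gK$ has measure exactly $\mu(K)=1$.
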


From Lemmas \ref{hecke1} and \ref{hecke2} we get,
\begin{cor} \cite[Corollary~4.4]{EGL} \label{hecke3}
Let $U^1$ be the subset of $L^2(\GG)$ of $K$-invariant unit vectors which are orthogonal to any $G^+$-invariant vector. Then,
\begin{equation}
\| H_w \|_{L_0^2(V)} \leq \sup_{f \in U^1} \langle \rho(w) f, f \rangle  
\end{equation} 
\end{cor}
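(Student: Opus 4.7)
My plan is to combine the two preceding lemmas with the standard matrix-coefficient characterization of the operator norm on Hilbert space. By the definition of the operator norm,
$$\|H_w\|_{L_0^2(V)} = \sup_{\substack{f_1,f_2 \in L_0^2(V)\\ \|f_1\|=\|f_2\|=1}} |\langle H_w f_1, f_2\rangle|.$$
I would view each pair of unit vectors $f_1, f_2$ as $K$-invariant unit vectors in $L^2(\GG)$, via the embedding described in the paragraph preceding Lemma \ref{hecke1}; the normalization $\mu(K)=1$ is precisely what makes unit vectors go to unit vectors.

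Lemma \ref{hecke1} then converts the Hecke matrix coefficient into a matrix coefficient of the unitary representation $\rho$,
$$\langle H_w f_1, f_2\rangle = \langle \rho(w) f_1, f_2\rangle,$$
while Lemma \ref{hecke2} ensures that each of $f_1$ and $f_2$ lies in the orthogonal complement of the $G^+$-invariants in $L^2(\GG)$, hence in $U^1$. Combined with Step 1 this yields the off-diagonal estimate
$$\|H_w\|_{L_0^2(V)} \;\leq\; \sup_{f_1,f_2 \in U^1} |\langle \rho(w) f_1, f_2\rangle|.$$

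The remaining and main step is to pass from off-diagonal matrix coefficients of $\rho(w)$ to diagonal ones. I would do this by a standard Hilbert-space polarization argument: expanding $\langle \rho(w)(f_1+i^k f_2),(f_1+i^k f_2)\rangle$ and summing over $k=0,1,2,3$ expresses $\langle \rho(w)f_1,f_2\rangle$ as a linear combination of diagonal matrix coefficients taken at unit vectors living inside the closed linear span of $U^1$, which is itself a $K$-invariant, $G^+$-orthogonal subspace. This is the only delicate point; everything else is a formal identification. Any loss of universal constants from polarization is harmless for the application, since in the next section this bound is fed into Oh's Theorem — which produces explicit polynomial decay in $q$ of the matrix coefficients, so constant factors are absorbed into the choice of $q_0 = q_0(d)$.
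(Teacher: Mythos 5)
Your first two steps are exactly the paper's (essentially one-line) derivation: write $\|H_w\|_{L_0^2(V)}$ as a supremum of $|\langle H_w f_1,f_2\rangle|$ over unit vectors $f_1,f_2\in L_0^2(V)$, view these as $K$-invariant unit vectors of $L^2(\GG)$ (the normalization $\mu(K)=1$ makes this norm-preserving), convert the Hecke coefficient into $\langle\rho(w)f_1,f_2\rangle$ by Lemma \ref{hecke1}, and use Lemma \ref{hecke2} to place both vectors in the orthogonal complement of the $G^+$-invariants. The paper presents the corollary as an immediate consequence of those two lemmas, and the two-vector bound $\|H_w\|_{L_0^2(V)}\leq\sup_{f_1,f_2\in U^1}|\langle\rho(w)f_1,f_2\rangle|$ you reach at that point is precisely the form actually consumed downstream: in the proof of Corollary \ref{hecke->bound}, Oh's Theorem is applied with two distinct $K$-invariant unit vectors $f_1,f_2$, since Theorem \ref{oh} bounds off-diagonal matrix coefficients directly.

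Your third step, the polarization, is extra and is the one place where you do not obtain the literal statement: the polarization identity gives $|\langle\rho(w)f_1,f_2\rangle|\leq 2\sup_{f\in U^1}|\langle\rho(w)f,f\rangle|$ (the four vectors $f_1+i^kf_2$ have squared norms summing to $8$), and no refinement of this argument can remove the factor $2$, because for a general bounded operator the norm can exceed the numerical radius by a factor of $2$, and the compression of $\rho(w)$ to the relevant $K$-invariant subspace is neither self-adjoint nor normal, so there is no structural reason for the constant-free diagonal bound to follow this way. So what you prove is the corollary with an extra factor $2$ (and with absolute values on the right-hand side, which the stated version arguably needs anyway); as you correctly observe, this is harmless for the application, since the loss is absorbed into $M_{d,n}$ and $q_0(d)$. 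The cleaner route, and the one matching both the paper's derivation and its later use, is simply to stop after your second step and quote the two-vector estimate.
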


\subsection{Oh's Theorem}
The following result by Oh \cite{Oh}, gives a unified bound on the matrix coefficients of a unitary representation of a reductive group over a local field.

\begin{thm}\cite[Theorem~1.1]{Oh} \label{oh}
Let $F$ be a local non-archimedean field with $char(F) \ne 2$.
Let $G$ be the group of the $F$-rational points of a connected reductive group $\tilde{G}$ over $F$ of rank $\geq 2$ 
and assume $\tilde{G}/Z(\tilde{G})$ is almost $F$-simple. Let $G^+$ be the the subgroup of $G$ generated by the unipotent elements of $G$.

Let $\Phi$ be a root system of $\tilde{G}$ with respect to some maximal torus $\Lambda$, and $\Phi^+ \subset \Phi$ a set of positive roots.
Let $S \subset \Phi^+$ be a strongly orthogonal system of roots, which, by definition, means $\forall \alpha,\beta \in S \, \Rightarrow \alpha \pm \beta \not \in S$.

Let $K_0$ be a good maximal compact subgroup of $G$, i.e. $K_0$ is the stabilizer of a special vertex in the building of $G$.
Any good maximal compact subgroup give rise to a Cartan decomposition $G = K_0 \Lambda^+ K_0$, where $\Lambda^+$ is a positive Weyl chamber 
(= the weight lattice of a maximal split torus $\Lambda$).

Then for any unitary representation $\rho$ of $G$ without a non-zero $G^+$-invariant vectors, 
and for any $K_0$-finite unit vectors $v$ and $u$, 
\begin{equation}
|\langle \rho(g)v,u \rangle| \leq (\dim(K_0v) \dim(K_0u))^{\frac{1}{2}} \xi_S(\lambda)
\end{equation}
where $g=k_1 \lambda k_2\in K_0 \Lambda^+ K_0 = G$, 
$\xi_S(\lambda) = \prod_{\alpha \in S} \Xi_{PGL_2(F)}\left( \begin{array}{cc} \alpha(\lambda) & 0 \\ 0 & 1 \end{array} \right)$ 
and $\Xi_{PGL_2(F)}$ is the Harish-chandra $\Xi$-function of $PGL_2(F)$.
\end{thm}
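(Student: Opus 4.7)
The plan is to follow the Howe--Moore philosophy, extended to the $p$-adic setting as in Cowling--Howe--Moore and refined by Oh: reduce the bound on matrix coefficients of $G$ to the bound for rank-one subgroups generated by each root in $S$, where the Harish-Chandra $\Xi$-function is known to dominate matrix coefficients of any unitary representation without invariants. Throughout, we use bi-$K_0$-invariance to reduce to the case that $v,u$ are $K_0$-invariant; handling general $K_0$-finite vectors then costs a factor of $(\dim(K_0v)\dim(K_0u))^{1/2}$ via decomposition into $K_0$-isotypic components and Cauchy--Schwarz. By the Cartan decomposition $g=k_1\lambda k_2\in K_0\Lambda^+ K_0$, it suffices to bound $|\langle\rho(\lambda)v,u\rangle|$ for $\lambda\in\Lambda^+$.

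First, for each $\alpha\in S$ let $L_\alpha\leq G$ be the (rank-one) subgroup generated by the root subgroups $U_{\pm\alpha}$; after passing to the adjoint form, $L_\alpha$ is a quotient of $SL_2(F)$ that can be realized inside a $PGL_2(F)$. The strong orthogonality hypothesis ($\alpha\pm\beta\notin S$; one wants in fact that $\alpha\pm\beta$ is not a root for distinct $\alpha,\beta\in S$) forces the groups $U_{\pm\alpha}$ and $U_{\pm\beta}$ to pairwise commute, so that $\prod_{\alpha\in S}L_\alpha$ embeds as a direct product in $G$. Because $U_{\pm\alpha}\subset G^+$ by definition, and $\rho$ has no $G^+$-invariant vector, a Mautner-type argument shows $\rho|_{L_\alpha}$ has no $L_\alpha$-invariant vector: an $L_\alpha$-fixed vector would be fixed by the split torus of $L_\alpha$, and the Mautner lemma would then force it to be $U_\alpha$-fixed, contradicting the absence of $G^+$-fixed vectors.

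Next, for each $\alpha\in S$, apply the rank-one bound: any unitary representation of $PGL_2(F)$ without invariant vectors satisfies $|\langle\pi(h)v,u\rangle|\leq\Xi_{PGL_2(F)}(h)$ for $K_0\cap L_\alpha$-invariant unit vectors. This is the classical consequence of the classification of the spherical unitary dual of $PGL_2(F)$ -- once the trivial representation is excluded, the spherical unitary spectrum is tempered, and tempered spherical functions are dominated by $\Xi_{PGL_2(F)}$. Applying this to $\rho$ restricted to $\prod_{\alpha\in S}L_\alpha$ and decomposing the restriction as a direct integral of tensor products of rank-one representations, each factor contributes its own $\Xi_{PGL_2(F)}$-bound evaluated at the projection of $\lambda$ onto $L_\alpha$, which is $\operatorname{diag}(\alpha(\lambda),1)\in PGL_2(F)$. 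Multiplying these yields the claimed $\xi_S(\lambda)=\prod_{\alpha\in S}\Xi_{PGL_2(F)}\bigl(\operatorname{diag}(\alpha(\lambda),1)\bigr)$.

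The hardest step will be to justify the factor-by-factor application of the rank-one estimate across the direct integral decomposition of $\rho|_{\prod L_\alpha}$; one must ensure that in almost every fiber of the decomposition, \emph{each} $L_\alpha$-factor acts without invariant vectors, not merely that the whole product does. This is where Mautner must be applied inside the direct integral, and where the restriction $\operatorname{char}(F)\neq 2$ enters in order to obtain the clean rank-one spherical classification via $SL_2/PGL_2$. A second technical burden is compatibility of the $G$-Cartan decomposition with the product $\prod L_\alpha$-Cartan under the map $\lambda\mapsto(\alpha(\lambda))_{\alpha\in S}$, and the verification that $K_0$ (being the stabilizer of a special vertex) restricts to a good maximal compact in each $L_\alpha$, so that the spherical rank-one bound is actually applicable.
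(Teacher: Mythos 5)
This statement is not proved in the paper at all: it is imported verbatim from Oh \cite[Theorem~1.1]{Oh}, so there is no in-paper argument to compare against, and the expected ``proof'' here is simply the citation. Judged as a sketch of Oh's theorem itself, your proposal contains a genuine gap at its central step. The claimed rank-one input --- that any unitary representation of $PGL_2(F)$ without invariant vectors satisfies $|\langle\pi(h)v,u\rangle|\leq\Xi_{PGL_2(F)}(h)$ for spherical vectors, because ``once the trivial representation is excluded, the spherical unitary spectrum is tempered'' --- is false. The spherical unitary dual of $PGL_2(F)$ contains the complementary series, which are nontrivial, non-tempered, and have spherical functions decaying strictly slower than $\Xi_{PGL_2(F)}$ (roughly $q^{-(\frac12-s)n}$ against $n\,q^{-n/2}$). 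This is exactly why the theorem carries the hypothesis $\mathrm{rank}\geq 2$: with $S=\{\alpha\}$ the statement is simply false for a rank-one group, so any argument that, like yours, never genuinely uses the higher-rank structure (only ``no $G^+$-invariant vectors'', which in rank one reduces to ``no invariant vectors'') cannot be correct. For the same reason your proposed fix for the ``hardest step'' --- ensuring via Mautner that each $L_\alpha$-factor in almost every fiber of the direct integral of $\rho|_{\prod L_\alpha}$ has no invariant vectors --- does not close the gap: absence of invariant vectors in a fiber still allows complementary series components, and Mautner applied inside a fiber says nothing about $G$-invariance, since the fibers are not $G$-representations.

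The mechanism in Oh's actual proof (following Howe's rank/Mackey method and Cowling--Haagerup--Howe) is different: for each $\alpha\in S$ one does not merely restrict to $L_\alpha$, but to a semidirect product of $L_\alpha$ with abelian (or Heisenberg-type) unipotent subgroups built from other roots of $G$. Spectral (Mackey) analysis of the unitary dual of that unipotent part, combined with the Mautner phenomenon and the absence of $G^+$-invariant vectors, shows that the relevant part of $\rho$ restricted to $L_\alpha$ is weakly contained in the regular representation, i.e.\ tempered; only then does the $\Xi$-bound for $K$-finite matrix coefficients apply, and an induction over the strongly orthogonal system (with Cauchy--Schwarz at each stage) yields the product bound $\xi_S(\lambda)$. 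Your peripheral points are fine --- the reduction to $K_0$-invariant vectors with the $(\dim(K_0v)\dim(K_0u))^{1/2}$ factor, the Cartan reduction to $\lambda\in\Lambda^+$, and your correction that strong orthogonality should mean $\alpha\pm\beta$ is not a root (the paper's ``$\alpha\pm\beta\notin S$'' is indeed a misstatement) --- but the heart of the estimate is missing. In the context of this paper, the appropriate course is to cite Oh rather than reprove the theorem.
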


\begin{rem}
Let us note that for any (unitary) $G$-representation, $\pi$, the $G^+$-invariant part of $\pi$, is a sub-$G$-representation, 
and therefore its orthogonal complement is also a sub-$G$-representation.
Indeed, since $G$ is reductive, then $G=G^+ \cdot Z(G)$ where $Z(G)$ is the center of $G$. 
So, for any $g \in G$, written as $g=g^+\cdot z$ for $g^+\in G^+$ and $z\in Z(G)$, any $G^+$-invariant vector, $v$, and any $h\in G^+$, we get
$h.(g.v)=(h\cdot g^+\cdot z).v=(z\cdot h\cdot g^+).v=z.v=(z\cdot g^+).v=g.v$,
which proves that $g.v$ is also a $G^+$-invariant vector.
\end{rem}

Oh's Theorem gives the following uniform bound on the non-trivial spectrum of the $w$-Hecke operators, 
where $w \in \Omega$ (see Lemma \ref{neighbour-building}).

\begin{cor} \label{hecke->bound}
For any type $i \in [d]$, and any $w \in \Omega_{i,k}$, let $H_w$ be the normalized $w$-Hecke operator acting $L_0^2(V)$. Then,
\begin{equation}
\|H_w \|_{L_0^2(V)} \leq (2k+1) \cdot q^{N_d} \cdot q^{-\frac{k}{d}}
\end{equation}
where $N_d$ is the maximal size of a spherical Coxeter group of rank $d$ (see \S 3).
\end{cor}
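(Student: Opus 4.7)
The plan is to combine Corollary \ref{hecke3}, Theorem \ref{oh}, and Lemma \ref{neighbour-building}(4). First I would use Corollary \ref{hecke3} to reduce the operator-norm bound to a uniform matrix coefficient estimate: $\|H_w\|_{L_0^2(V)} \leq \sup_{f \in U^1} |\langle \rho(w) f, f \rangle|$, where each $f \in U^1$ is a $K$-invariant unit vector in $L^2(\GG)$ orthogonal to every $G^+$-invariant vector. By the remark following Theorem \ref{oh}, the space of $G^+$-invariants in $L^2(\GG)$ is $G$-stable, so its orthogonal complement is a unitary $G$-subrepresentation with no non-zero $G^+$-invariants, on which Oh's estimate applies to $f$.

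Writing $w = k_1 a k_2 \in K_0 \Lambda^+ K_0$ as in Lemma \ref{neighbour-building}(4), Theorem \ref{oh} yields $|\langle \rho(w) f, f \rangle| \leq \dim(K_0 f) \cdot \xi_S(a)$ for any strongly orthogonal system $S$. For the dimension factor I would observe that $K \cap K_0$ is an open, hence finite-index, subgroup of the compact group $K_0$, so $K$-invariance of $f$ forces the $K_0$-orbit of $f$ to factor through the finite coset space $K_0/(K \cap K_0)$. The index $[K_0 : K \cap K_0]$ equals $|K_0 \cdot v_i|$, which is at most the number of chambers through the special vertex $v_{sp}$; by property \ref{reg} of the building this is at most $q^{N_d}$, so $\dim(K_0 f) \leq q^{N_d}$ uniformly in $f \in U^1$.

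For the spectral factor I would take the singleton $S = \{\alpha_{max}\}$, which is automatically strongly orthogonal, so that $\xi_S(a) = \Xi_{PGL_2(F)}\bigl(\begin{smallmatrix} \alpha_{max}(a) & 0 \\ 0 & 1 \end{smallmatrix}\bigr)$. Letting $n$ be the integer with $|\alpha_{max}(a)|_F = q^{-n}$, Lemma \ref{neighbour-building}(4) gives $n \geq 2k/d$. The explicit formula for the Harish-Chandra spherical function of $PGL_2(F)$ yields $\Xi_{PGL_2(F)}\bigl(\begin{smallmatrix} \pi^n & 0 \\ 0 & 1 \end{smallmatrix}\bigr) \leq (n+1) q^{-n/2}$, and a short case analysis shows $(n+1)q^{-n/2} \leq (2k+1) q^{-k/d}$ for all $n \geq 2k/d$ and all odd prime powers $q$ (at $n = 2k/d$ the left side equals $(2k/d + 1)q^{-k/d} \leq (2k+1) q^{-k/d}$, and for larger $n$ the exponential decay in $q^{-n/2}$ dominates). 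Multiplying the two bounds gives the claim.

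The main obstacle is the uniform dimension estimate on $K_0 f$: the key point, not immediately visible from the statement of Oh's Theorem, is that $K$-invariance of $f$ (an a priori weaker condition than $K_0$-finiteness) automatically yields $K_0$-finiteness with a dimension controlled by a purely geometric count in the link of the special vertex. A secondary care point is the choice $S = \{\alpha_{max}\}$, which is exactly what matches Lemma \ref{neighbour-building}(4) to the single $PGL_2$-factor provided by Oh's Theorem.
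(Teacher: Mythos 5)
Your proposal is correct and follows essentially the same route as the paper: Corollary \ref{hecke3} plus Oh's Theorem applied to the orthogonal complement of the $G^+$-invariants, the dimension bound $\dim(K_0 f)\leq [K_0:K\cap K_0]\leq q^{N_d}$ via the chamber count at a vertex, the singleton system $S=\{\alpha_{max}\}$ matched with Lemma \ref{neighbour-building}(4), and the explicit $\Xi_{PGL_2(F)}$ formula. You even spell out the small monotonicity/case check showing $(n+1)q^{-n/2}\leq (2k+1)q^{-k/d}$ for all valuations $n\geq 2k/d$, which the paper leaves implicit.
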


\begin{proof}
Let $G$ be as before, $K$ the stabilizer of an $i$-type vertex and $K_0$ the stabilizer of a special vertex.
Consider the subrepresentation of $L^2(\GG)$, which is orthogonal to the $G^+$-invariant part (this is indeed a sub-$G$-representation by the above Remark). 
Applying Theorem \ref{oh} for this representation, together with Corollary \ref{hecke3}, for any strongly orthogonal system $S$, gives us
\begin{equation}
\|H_w \|_{L_0^2(V)} \leq \sup_{f_1,f_2} (\dim(K_0f_1)\dim(K_0f_2))^{1/2} \xi_S(a_w)
\end{equation}
where $f_1,f_2$ are $K$-invariant unit vectors and $w \in K_0 a_w K_0$, $a_w \in \Lambda^+$.

For any $K$-invariant vector $f$, $\dim(K_0 f) \leq [K_0 : K_0 \cap K]$.
Each chamber in $\B$ contains all $[d]$ types, so w.l.o.g. we may choose $K$ and $K_0$ to be the stabilizers of two adjacent vertices $v_i$ and $v_{sp}$, with the possibility that $v_i = v_{sp}$.
Hence, $ [K_0 : K_0 \cap K]$ is bounded by the number of neighbors of $v_i$ of special-type, which is bounded by the number of chambers containing $v_i$, which is $\leq q^{N_d}$ 
(see \S 3, properties of Bruhat-Tits buildings). 
Therefore for any $K$-invariant $f_1$ and $f_2$, 
\begin{equation}
(\dim(K_0 f_1)\dim(K_0 f_2))^{1/2} \leq q^{N_d}
\end{equation} 

Finally, let us take as our strongly orthogonal system the singleton $S=\{\alpha_{max} \}$, where $\alpha_{max}$ is the highest root in $\Phi(G,\Lambda)$. 
Then by Lemma \ref{neighbour-building} (4), the $F$-valuation of $\alpha_{max}(a_w) $ is at least $\frac{2k}{d}$,
and the claim follows from the following $\Xi$-function formula for $PGL_2(F)$ (see \cite[Section~3.8]{Oh}): For any $n \in \mathbb{N}$,
\begin{equation}
\Xi_{PGL_2(F)} \left( \begin{array}{cc} \pi^{\pm n} & 0 \\ 0 & 1 \end{array} \right) = q^{-n/2}\left(\frac{n(q-1)+q+1}{q+1}\right) \leq (n+1)q^{-n/2}
\end{equation}
where $\pi$ is a uniformizer parameter of the local field $F$.
\end{proof}

\begin{rem}
Let us point out that in the special case where $G=PGL_n$, then any vertex is a special vertex, in which case one can erase the term $q^{N_d}$ in the above Corollary.
In particular, if $G=PGL_n$, then for any non-trivial (i.e. $w\not \in K$) Hecke operator, $\|H_w \|_{L_0^2(V)} \leq \frac{3}{q}$. 
However, for general reductive groups, one only gets the following trivial bound $\|H_w \|_{L_0^2(V)} \leq 3\cdot q^{N_d-\frac{1}{d}}$, for any non-trivial $w$.
This is where the original approach of \cite{FGLNP}, fails for general Bruhat-Tits buildings.
\end{rem}

\subsection{Proofs}
Let us now collect all the results from before, to get a bound on the normalized second largest eigenvalue of type-induced bipartite graphs.
Afterward, the proof of the main Theorem will follow.

\begin{thm} \label{B->bound}
For any $2 \leq d \in \mathbb{N}$ and $n \in \mathbb{N}$, there exist $M_{d,n} >0$, such that:

Let $F$ be a local non-archimedean field with $char(F) \ne 2$, of residue degree $q$.
Let $G$ be the group of the $F$-rational points of a connected reductive group $\tilde{G}$ over $F$ of rank $d$, and assume $\tilde{G}/Z(\tilde{G})$ is almost $F$-simple.
Let $\B$ be the Bruhat-Tits building associated to $G$.
Let $\Gamma \leq G$ be a cocompact lattice, which acts type-preserving on the building, and of injectivity radius $r(\Gamma) > 4n$.
For $i=0,\ldots,d$, let $B_i$ be the $i$-induced bipartite graph of the quotient $\GB$. Then
\begin{equation}
\max_{i=0,\ldots,d}(\tilde{\lambda}(B_i)) \leq M_{d,n} \cdot q^{-\frac{1}{2d} + \frac{N_d}{2n}}
\end{equation}
\end{thm}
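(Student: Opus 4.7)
The plan is to chain together the three key estimates already established in this section: Proposition \ref{B->dist} (expressing $(\tilde{\lambda}(B_i))^{2n}$ as a weighted sum of the distance-operator norms $\|A_k\|$), Lemma \ref{dist->hecke} (bounding each $\|A_k\|$ by a maximum of Hecke operator norms), and Corollary \ref{hecke->bound} (bounding those Hecke norms via Oh's theorem). The hypothesis $r(\Gamma) > 4n$ is exactly what is needed to invoke both Proposition \ref{B->dist} and Lemma \ref{dist->hecke} for every $k \leq n$, so no further geometric input is required. The constant $M_{d,n}$ will be extracted from the combinatorial prefactors and the cardinality of the sum.

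Concretely, fix a type $i \in [d]$ and write $B = B_i$. Substituting the bound of Corollary \ref{hecke->bound} into Lemma \ref{dist->hecke} gives
\begin{equation*}
\|A_k\|_{L_0^2(V)} \leq (2k+1) \cdot q^{N_d} \cdot q^{-k/d}
\end{equation*}
for each $k \leq n$. Feeding this into Proposition \ref{B->dist} yields
\begin{equation*}
(\tilde{\lambda}(B))^{2n} \;\leq\; (3N_d)^n \, q^{N_d - n} \sum_{k=0}^n (2k+1)\, q^{k(1 - 1/d)}.
\end{equation*}
Since $d \geq 2$ gives $1 - 1/d \geq 1/2 > 0$, the summand is increasing in $k$, so the sum is bounded by $(n+1)(2n+1)\, q^{n(1-1/d)}$. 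Hence
\begin{equation*}
(\tilde{\lambda}(B))^{2n} \;\leq\; (3N_d)^n (n+1)(2n+1) \cdot q^{N_d - n/d}.
\end{equation*}
Taking the $2n$-th root and setting
\begin{equation*}
M_{d,n} \;:=\; \bigl[(3N_d)^n (n+1)(2n+1)\bigr]^{1/(2n)},
\end{equation*}
which depends only on $d$ and $n$, produces the desired inequality $\tilde{\lambda}(B_i) \leq M_{d,n} \cdot q^{-1/(2d) + N_d/(2n)}$. Taking the maximum over $i \in [d]$ completes the proof.

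There is no genuine obstacle in this final assembly — the real work was all done in establishing Proposition \ref{B->dist}, Lemma \ref{dist->hecke} and Corollary \ref{hecke->bound}. The only conceptual point worth emphasizing is why one cannot simply take $n=1$ (as in the $\tilde{A}_d$ case of \cite{FGLNP}): the factor $q^{N_d}$ in Corollary \ref{hecke->bound}, which comes from the orbit bound $[K_0 : K_0 \cap K] \leq q^{N_d}$ when the $i$-type vertex is not itself special, would swamp the gain $q^{-1/d}$ from Oh's theorem and render the bound trivial. By letting $n$ grow linearly in $d \cdot N_d$, the wasteful $q^{N_d}$ is amortized across the $2n$-walk and the resulting exponent $N_d/(2n) - 1/(2d)$ can be made strictly negative, so that choosing any fixed $n > d \cdot N_d$ (and $q$ large enough) will yield the nontrivial spectral gap needed to conclude Theorem \ref{main}.
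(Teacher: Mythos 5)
Your proposal is correct and follows essentially the same route as the paper: chaining Proposition \ref{B->dist}, Lemma \ref{dist->hecke} and Corollary \ref{hecke->bound}, with the hypothesis $r(\Gamma)>4n$ covering all $k\leq n$. The only cosmetic difference is bookkeeping in the exponent: the paper weakens $q^{k-n}$ to $q^{(k-n)/d}$ so every term in the sum carries the same power $q^{N_d-n/d}$, while you keep the $k$-dependence and bound the increasing sum by $(n+1)$ times its top term, yielding an equally valid constant $M_{d,n}$.
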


\begin{proof}
Let $i$ be any type. By Proposition \ref{B->dist}, 
\begin{equation}
(\tilde{\lambda}(B_i))^{2n} \leq \sum_{k=0}^n (3N_d)^n \cdot q^{k-n} \cdot \|A_k \|_{L_0^2(V)} \leq \sum_{k=0}^n (3N_d)^n \cdot q^{\frac{k-n}{d}} \cdot \|A_k \|_{L_0^2(V)}
\end{equation}
where the last inequality follows from the fact that $q^{k-n}\leq 1$, therefore $q^{k-n} \leq \sqrt[d]{q^{k-n}}=q^{\frac{k-n}{d}}$.
From Lemma \ref{dist->hecke} and Corollary \ref{hecke->bound}, we get
\begin{equation}
\|A_k \|_{L_0^2(V)} \leq \max_{w \in \Omega_{i,k}} \|H_w\|_{L_0^2(V)} \leq (2k+1) \cdot q^{N_d - \frac{k}{d}}
\end{equation}
Combining these two equations together we get,
\begin{equation}
\tilde{\lambda}(B_i) \leq \sqrt[2n]{\sum_{k=0}^n (3N_d)^n \cdot (2k+1) \cdot  q^{\frac{k-n}{d} + N_d - \frac{k}{d}}}  =  M_{d,n} \cdot q^{-\frac{1}{2d} + \frac{N_d}{2n}} 
\end{equation}
where $M_{d,n} = \sqrt[2n]{\sum_{k=0}^n (3N_d)^n \cdot(2k+1)}$, and recall that $N_d$ depends only on $d$ (see \S 3), which finishes the proof.
\end{proof}

We are now able to prove our main theorem.
\begin{proof}[Proof of Theorem \ref{main}]
Let $H$ be the corresponding partite hypergraph, of the finite quotient $X = \GB$. 
We will choose $n=2d\cdot N_d$, and then by Proposition \ref{disc->B} and Theorem \ref{B->bound}, we get
\begin{equation}
Disc(H) \leq d \cdot max_{i=0,\ldots,d}(\tilde{\lambda}(B_i)) 
\leq d \cdot M_{d,2d\cdot N_d} \cdot q^{-\frac{1}{2d} + \frac{N_d}{4d\cdot N_d}} = d \cdot M_{d,2d\cdot N_d} \cdot q^{-\frac{1}{4d}}
\end{equation}
Now, we take $\epsilon = \frac{1}{2}\cdot c_d^{d+1}$ and $q_0(d) := (2d \cdot M_{d,2dN_d} \cdot (c_d)^{-(d+1)})^{4d}$ (where $c_d$ is Pach's constant),
and note that $\epsilon$ and $q_0$ depends only on $d$. Then for any $q> q_0$ ,$(c_d)^{d+1} - Disc(H) \geq \epsilon > 0$.
Therefore, by Corollary \ref{over->disc} the quotient $X=\GB$ has the $\epsilon$-geometric overlap property.
\end{proof}

\begin{rem}
By \cite{BH} any reductive group over zero characteristic local field has cocompact arithmetic lattices,
and by a standard argument they have finite index subgroups satisfying the assumption of Theorem \ref{main}.
\end{rem}


\end{document}